\pgfplotsset{compat=1.18}
\definecolor{dark2Green}{HTML}{1B9E77}
\definecolor{dark2Orange}{HTML}{D95F02}
\definecolor{dark2Purple}{HTML}{7570B3}
\definecolor{dark2Pink}{HTML}{E7298A}
\crefname{subsection}{subsection}{subsections}
\newtheorem{theorem}{Theorem}%[section]
\newtheorem{lemma}[theorem]{Lemma}%[section]
\newtheorem{corollary}[theorem]{Corollary}%[section]
\newtheorem{proposition}[theorem]{Proposition}%[section]
\theoremstyle{definition}
\newtheorem{remark}[theorem]{Remark}%[section]
\newtheorem{example}[theorem]{Example}%[section]
\newtheorem{definition}[theorem]{Definition}%[section]
\newcommand{\R}{\mathbb R}
\newcommand{\C}{\mathbb C}
\newcommand{\N}{\mathbb N}
\newcommand{\B}{\mathcal{B}}
\newcommand{\I}{\mathcal{I}}
\newcommand{\diff}{\mathrm{d}}
\newcommand{\derb}{\left(\frac{\diff}{\diff t}\right)}
\newcommand{\tderb}{(\tfrac{\diff}{\diff t})}
\newcommand{\lag}{\mathfrak{l}}
\newcommand{\n}{\mathfrak{n}}
\newcommand{\col}{\operatorname{col}}
\newcommand{\rank}{\operatorname{rank}}
\newcommand{\im}{\operatorname{im}}
\DeclarePairedDelimiter\norm{\lVert}{\rVert}
\DeclarePairedDelimiter\scp{\langle}{\rangle}
\DeclarePairedDelimiter\abs{\lvert}{\rvert}
\title[A continuous-time fundamental lemma]{
A continuous-time fundamental lemma\\ and its application in data-driven optimal control
}
\author[P.~Schmitz]{Philipp Schmitz}
\address[P.~Schmitz]{Technische Universität Ilmenau, Institute of Mathematics, Optimization-based Control Group, Ilmenau, Germany}
\email{philipp.schmitz@tu-ilmenau.de}
\author[T.~Faulwasser]{Timm Faulwasser}
\address[T.~Faulwasser]{Hamburg University of Technology, Institute of Control Systems, Hamburg, Germany}%
\email{timm.faulwasser@ieee.org}
\author[P.~Rapisarda]{Paolo Rapisarda}
\address[P.~Rapisarda]{University of Southampton, Southampton, United Kingdom}%
\email{pr3@ecs.soton.ac.uk}
\author[K.~Worthmann]{Karl Worthmann}
\address[K.~Worthmann]{Technische Universität Ilmenau, Institute of Mathematics, Optimization-based Control Group, Ilmenau, Germany}%
\email{karl.worthmann@tu-ilmenau.de}
\thanks{P.~Schmitz is grateful for the support from the Carl Zeiss Foundation (VerneDCt -- Project No.\ 2011640173). K.~Worthmann gratefully acknowledges funding by the German Research Foundation (DFG; project number 507037103).}%
\begin{document}

\begin{abstract}
Data-driven control of discrete-time and continuous-time systems is of tremendous research interest. In this paper, we
explore data-driven optimal control of continuous-time linear systems using input-output data. Based on a density result, we rigorously derive error bounds for finite-order polynomial approximations of elements of the system behavior. To this end, we leverage a link between latent variables and flat outputs of controllable systems. Combined with a continuous-time counterpart of the fundamental lemma by Willems et al., we characterize the suboptimality resulting from  polynomial approximations in data-driven linear-quadratic optimal control problems. Finally, we draw upon a numerical example to illustrate our results.
\end{abstract}
\smallskip

\maketitle
\noindent \textbf{Keywords.} continuous time, data-driven control, differential flatness, identifiable, persistency of excitation, polynomial approximation

\section{Introduction}

Data-driven control, i.e., the design of controller and feedback laws directly from measured data, is a topic of ongoing research interest, see~\cite{DataCon_CSMI,DataCon_CSMII} and the range of articles in these special issues. At the core of many developments for linear discrete-time systems is a pivotal result, known as Willems et al.'s \textit{fundamental lemma}~\cite{willems2005note}, which allows to parameterize the external (input-output) finite-horizon behavior of controllable systems from sufficiently informative data, see, e.g., the recent survey~\cite{FaulOu23} and the references therein. The interest in this result has been catalyzed by recent works such as \cite{coulson2019data,de2019formulas,markovsky2021behavioral}; applications of data-driven control concepts are discussed in the literature \cite{schmitz2022data,bilgic2022toward,markovsky2023data,verhoek2023direct}. 
In \cite{Mueller22,Lopez24} and \cite{Rapisarda23b} continuous-time extensions of the fundamental lemma have been proposed. While the former works require to solve a scalar ODE to compute future trajectories, in the latter paper an approach  to compute a generating representation based on polynomial series expansions of input-output trajectories is proposed. 
However, the latter result has not yet been used to design data-driven controllers. 
In the present paper, we address this gap: 
We extend the results from~\cite{Rapisarda23b} by deriving errors bounds for polynomial series expansions on elements of the behavior. 
We also prove that the set of polynomial system trajectories is dense in the set of all system trajectories. 
We use the approximation bounds to derive bounds on the optimality gap resulting from using finite-order polynomials to solve linear–quadratic regulator (LQR) problem formulated in terms of the behavior. 
Finally, we establish a continuous-time fundamental lemma involving the Gramians of trajectories and, based on this, present a data-driven approximation for the LQR problem.

The remainder of this paper is structured as follows:
In \Cref{sec:Intro} we revisit foundational concepts in behavioral systems theory, with most results rederived to suit our specific setting. Utilizing the connection between flat outputs and latent variables, in \Cref{lem:flatimage}, we derive a specific behavioral representation, which is beneficial for subsequent approximation results.
Moreover, we recap properties of Legendre polynomials (\Cref{subsec:polylift,subsec:trunc}) and analyze their advantage in the approximation of behavioral elements (\Cref{subsec:polytraj}) as shown in \Cref{lem:poly_traj1}.
In \Cref{sec:LQR_approx}, we introduce a version of the finite-horizon linear-quadratic optimal control problem and its finite-dimensional approximation in the space of linear combinations of Legendre polynomials including a convergence analysis. 
In \Cref{sec:f-lemma}, we define the concept of persistency of excitation (\Cref{sec:PE}) and we state a continuous-time fundamental lemma (\Cref{thm:FL}). \Cref{subsec:ident} demonstrates how the fundamental lemma can be applied in system identification.
In \Cref{sec:dataOptCon}, we discuss the application of our results to the data-driven solution of the finite-horizon optimal control problem before we conclude the paper in \Cref{Sec:Conclusion} pointing out directions of current and future research. \medskip

\noindent\textbf{Notation}:
Given two sets $X$ and $\Omega$, the set of functions $f:\Omega\rightarrow X$ is denoted by $X^\Omega$. Let $\I$ be a real interval; we denote by $\overline{\I}$ the closure of $\I$. Let $d, k\in\N$; then $L^2(\I, \R^d)$ denotes the space of equivalence classes of square integrable functions $f\in(\R^d)^\I$ and $H^k(\I,\R^d)$ is the $k$th order Sobolev space associated with $L^2(\I, \R^d)$. The scalar product in $L^2(\I,\R^d)$ and its induced norm are given by $\scp{f,g}=\int_{\I} f(\tau)^\top g(\tau)\,\mathrm d\tau$ and $\norm{f}=\sqrt{\scp{f,f}}$. The usual norm in $H^k(\I,\R^d)$ is denoted by $\norm{\,\cdot\,}_{H^k}$. For $k\in\N\setminus\{0\}$ and $f\in H^{k-1}(\I, \R^d)$ we set \begin{equation}
\label{eq:Lambda}
    \Lambda_{k}(f) = \begin{bmatrix}
    f \\\vdots\\ f^{(k-1)}
\end{bmatrix}\in L^2(\I, \R^{k\cdot d}).
\end{equation} In particular, $\norm{f}_{H^{k-1}} = \norm{\Lambda_{k}(f)}$. $\mathcal C^\infty(\I, \R^d)$ is the space of infinitely differentiable functions from $\I$ to $\R^d$ and $\mathcal C_\mathrm{c}^\infty(\I, \R^d)$ consists of those functions of $\mathcal C^\infty(\I, \R^d)$ with compact support. Given a Hilbert space $X$, $\ell^2(\N,X)$ is the space of square summable sequences in $X^\N$.

The identity operator from a vector space $X$ onto itself is denoted by $I_X$ or simply $I$ when clear from the context. In the case of a finite dimensional space $X=\R^d$ we also write $I_d$. The Euclidean norm in $\R^d$ is denoted by $\norm{\,\cdot\,}_2$. If $A_0,\dots, A_k$ are matrices with the same number of columns, we define $\col(A_0,\dots, A_k):=\begin{bmatrix}
  A_0^\top&\dots &  A_k^\top
\end{bmatrix}{}^\top$. We always identify $\R^d$ with $\R^{d\times 1}$. Given a matrix $M$, we denote by $\im M$ and $\ker M$ its image and kernel. Further, $M^\top$ and $M^\dagger$ denote the transpose and Moore--Penrose inverse.

Finally, we denote by $\R[s]$ the ring of polynomials with real coefficients in the indeterminate $s$, and by $\R^{g\times q}[s]$ the ring of $g\times q$ polynomial matrices with real coefficients. 

\section{Linear differential systems}\label{sec:Intro}
We first recapitulate behavioral concepts for linear time-invariant systems. We then connect this paradigm to polynomial series expansions of trajectories and explore how polynomial trajectories can approximate system behaviors.

\subsection{Behaviors}\label{subsec:behaviors}
In the following, we deal with dynamical systems given by the time interval $\I=(-1,1)$\footnote{We choose such interval purely for simplicity of notation; with straightforward modifications, \emph{any} other bounded open interval can be used.}, the signal space $\R^q$ and a \emph{behavior} $\B\subset (\R^q)^\I$. We focus on \emph{linear differential behaviors}, i.e.\ the set of solutions to a system of linear, constant-coefficient differential equations
\begin{equation}\label{eq:ker}
    R\left(\frac{\diff}{\diff t}\right)w=0,
\end{equation}
where $R(s)=R_0 s^0 + \dots + R_r s^r$ is a polynomial matrix in $\R^{g\times q}[s]$. Here, the solution $w$ of \eqref{eq:ker} is meant in the sense of \emph{weak solutions}, i.e.,\ $w\in L^2(\I,\R^q)$ and it satisfies
\begin{equation}
\label{eq:weak}
    0=\sum_{i=0}^r (-1)^i\int_\I w^\top R_i^\top \tfrac{\diff^i \phi}{\diff t^i}\,\diff t
\end{equation}
for all test functions $\phi\in \mathcal C_\mathrm{c}^\infty(\I, \R^{g})$. Given $w\in \mathcal C^\infty(\overline{\I},\R^q)$ integration by parts shows that \eqref{eq:weak} is valid if and only \eqref{eq:ker} holds pointwise. 

Given our choice of solution set, we need to slightly generalize and prove some well-known results from \cite{yellowbook}, where the solutions of \eqref{eq:ker} are assumed to be infinitely differentiable. In particular, the equivalence of the different representations of behaviors, established essentially for smooth functions in \cite{yellowbook}, requires verification in the context of weak $L^2$-solutions.
\begin{lemma}
    \label{lem:behavclosed}
    The behavior
    \begin{equation}
        \B := \{w\in L^2(\I,\R^q)\,|\, R(\tfrac{\diff}{\diff t})w=0\}
    \end{equation}
    is closed in $L^2(\I,\R^q)$.
\end{lemma}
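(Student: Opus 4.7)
The plan is to verify closedness via the standard sequential characterization: take an arbitrary sequence $(w_n)_{n\in\N}\subset\B$ with $w_n\to w$ in $L^2(\I,\R^q)$ and show that $w$ satisfies the weak equation \eqref{eq:weak}, hence lies in $\B$.

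First, I would fix an arbitrary test function $\phi\in\mathcal C_\mathrm{c}^\infty(\I,\R^{g})$. Because $\phi$ is smooth with compact support in $\I$, each derivative $\tfrac{\diff^i\phi}{\diff t^i}$ belongs to $\mathcal C_\mathrm{c}^\infty(\I,\R^{g})\subset L^2(\I,\R^{g})$, so $R_i^\top \tfrac{\diff^i \phi}{\diff t^i}\in L^2(\I,\R^q)$ for every $i\in\{0,\dots,r\}$. Continuity of the $L^2$-inner product then yields
\begin{equation*}
\int_\I w_n^\top R_i^\top \tfrac{\diff^i \phi}{\diff t^i}\,\diff t \;\longrightarrow\; \int_\I w^\top R_i^\top \tfrac{\diff^i \phi}{\diff t^i}\,\diff t \quad\text{as } n\to\infty
\end{equation*}
for each $i$. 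Summing the finitely many terms, the identity \eqref{eq:weak} passes to the limit, giving
\begin{equation*}
0 = \sum_{i=0}^r (-1)^i\int_\I w^\top R_i^\top \tfrac{\diff^i \phi}{\diff t^i}\,\diff t
\end{equation*}
for every $\phi\in\mathcal C_\mathrm{c}^\infty(\I,\R^{g})$, so $w\in\B$ and $\B$ is closed.

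An equivalent, slightly more abstract way to phrase the same argument is to observe that $R(\tfrac{\diff}{\diff t})\colon L^2(\I,\R^q)\to \mathcal D'(\I,\R^g)$ is continuous, as the composition of the continuous embedding $L^2\hookrightarrow\mathcal D'$ with the distributional differential operator $R(\tfrac{\diff}{\diff t})$, which itself is continuous on $\mathcal D'$. Since $\B$ is the preimage of $\{0\}$ under this continuous map and $\{0\}$ is closed in $\mathcal D'$, closedness of $\B$ in $L^2$ follows at once. I expect no real obstacle here: the whole point is that the weak formulation linearizes the condition into a family of continuous linear functionals on $L^2$, whose common kernel is automatically closed.
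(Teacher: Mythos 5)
Your proof is correct and follows essentially the same route as the paper: both arguments observe that, for each fixed test function $\phi$, the weak formulation \eqref{eq:weak} defines a continuous linear functional on $L^2(\I,\R^q)$, so membership in $\B$ passes to the $L^2$-limit. The paper states this in one line by naming the functional $f_\phi$, whereas you verify the continuity term by term; the distributional rephrasing you add is a harmless repackaging of the same fact.
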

\begin{proof}
    Consider a sequence $(w_n)_{n\in\N}$ in $\B$ which converges to some $w\in L^2(\I,\R^q)$. Note that for each $\phi\in\mathcal C_\mathrm{c}^\infty(\I, \R^{g})$ the right hand side in \eqref{eq:weak} defines a linear, continuous functional $f_\phi:L^2(\I,\R^q)\rightarrow \R$. By continuity $0=\lim_{n\rightarrow \infty} f_\phi(w_n)=f_\phi(w)$ for every $\phi\in\mathcal C_\mathrm{c}^\infty(\I, \R^{g})$, that is $w$ solves \eqref{eq:ker} and $w\in\B$.
\end{proof}

We recall the notion of behavioral controllability, cf.\ Definition~5.2.2 in \cite{yellowbook}.
    The behavior $\B$ is \emph{controllable} if for each two trajectories $w_0$,~$w_1\in \B$ there is $t_1\in (0,1)$ and $w\in\B$ such that
    \begin{equation}
        \label{eq:patchingup}
        w(t) = \begin{cases}
            w_0(t) & \text{if }t\in (-1,0],\\
            w_1(t-t_1) & \text{if } t\in [0, 1).
        \end{cases}
    \end{equation}

Partitioning $R$ compatibly with a known selection of inputs and outputs  $w=\col(u,y)\in\B$, cf.\ Definition~3.3.1 in \cite{yellowbook}, one obtains the \emph{input-output representation}
\begin{equation}\label{eq:io}
    P\derb y = Q\derb u,
\end{equation}
where $P\in \R^{p\times p}[s]$ can be assumed to be nonsingular and $Q\in \R^{p\times m}[s]$ with $q=m+p$.

A linear differential behavior $\B$ also admits an \emph{input-state-output representation} (see \cite{Rap97}), i.e.,\ there are matrices $A\in\R^{n\times n}$, $B\in\R^{n\times n}$, $C\in\R^{p\times n}$, $D\in\R^{p\times m}$ such that for all $w=\col(u,y)\in\B\cap \mathcal C^\infty(\overline{\I},\R^q)$ there exists $x\in \mathcal C^\infty(\overline{\I},\R^n)$ with
\begin{equation}\label{eq:sys}
\begin{split}
    \frac{\diff}{\diff t} x &= Ax + Bu\\
    y &= Cx+Du.
\end{split}
\end{equation}

The following result relates the \emph{external behavior} described by \eqref{eq:io}, consisting of all input-output trajectories, and its input-state-output counterpart in the terms of closed $L^2$-subspaces. 
\begin{lemma}
    \label{lem:sys}
    Given an input-state-output representation~\eqref{eq:sys} of $\B$, one has
    \begin{equation}
    \label{eq:isobehav}
        \mathcal B = \left\{\col(u,y)\in L^2(\I,\R^{m+p})\,\middle|\, \begin{aligned} &\exists~x\in H^1(\I,\R^n)\ \\&\text{s.t. \eqref{eq:sys} holds}\end{aligned}\right\}.
    \end{equation}
\end{lemma}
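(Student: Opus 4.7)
The plan is to prove the equality by mutual inclusion; write $\mathcal{S}$ for the right-hand side of \eqref{eq:isobehav}. The two key tools are \Cref{lem:behavclosed}, giving closedness of $\B$ in $L^2$, and the smooth equivalence of the input-state-output representation \eqref{eq:sys} with $\B$ that underlies~\cite{Rap97}: for a smooth triple $(u,y,x)\in\mathcal{C}^\infty(\overline{\I},\R^{q+n})$ satisfying \eqref{eq:sys}, the pair $(u,y)$ lies in $\B\cap\mathcal{C}^\infty(\overline{\I},\R^q)$, while the converse is the hypothesis stated in the paper just before the lemma.

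For $\mathcal{S}\subseteq\B$, I would argue by smooth approximation and closedness. Given $(u,y)\in\mathcal{S}$ with state $x\in H^1(\I,\R^n)$, I pick $u_n\in\mathcal{C}^\infty(\overline{\I},\R^m)$ with $u_n\to u$ in $L^2$, solve $\dot{x}_n = Ax_n + Bu_n$ with $x_n(-1)=x(-1)$ via the variation-of-constants formula to obtain a smooth $x_n$ with $x_n\to x$ in $H^1$, and set $y_n := Cx_n + Du_n\to y$ in $L^2$. Since each smooth triple $(u_n,y_n,x_n)$ satisfies \eqref{eq:sys} pointwise, the smooth equivalence gives $(u_n,y_n)\in\B\cap\mathcal{C}^\infty(\overline{\I},\R^q)$, and \Cref{lem:behavclosed} yields $(u,y)\in\B$.

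For $\B\subseteq\mathcal{S}$, given $(u,y)\in\B$, I set $\tilde{x}(t) := \int_{-1}^t e^{A(t-\tau)}Bu(\tau)\,\diff \tau\in H^1(\I,\R^n)$, which solves $\dot{\tilde{x}} = A\tilde{x} + Bu$. Defining $\tilde{y} := C\tilde{x} + Du$ and invoking the already-proved inclusion $\mathcal{S}\subseteq\B$ gives $(u,\tilde{y})\in\B$, so by linearity of $\B$ the residual $z := y - \tilde{y}$ is a weak $L^2$-solution of the autonomous equation $P\derb z = 0$. Once I know $z$ is smooth, applying the smooth state-space representation to $(0,z)\in\B\cap\mathcal{C}^\infty(\overline{\I},\R^q)$ produces $\bar{x}\in\mathcal{C}^\infty(\overline{\I},\R^n)$ with $\dot{\bar{x}} = A\bar{x}$ and $z = C\bar{x}$, and then $x := \tilde{x} + \bar{x}\in H^1(\I,\R^n)$ is a state realization for $(u,y)$, so that $(u,y)\in\mathcal{S}$.

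The main obstacle is precisely the regularity step: showing that every weak $L^2$-solution of $P\derb z = 0$ is smooth on $\overline{\I}$. My plan is to exploit the adjugate identity $\operatorname{adj}(P(s))\,P(s) = \det P(s)\,I_p$ to obtain componentwise the scalar autonomous equation $(\det P)\tderb z_i = 0$. Because $P$ is nonsingular, $\det P$ is a nonzero scalar polynomial, and every distributional solution on an interval of a nonzero constant-coefficient scalar linear ODE is a finite linear combination of $t^j e^{\lambda t}$ and hence real-analytic; restricting to $\overline{\I}$ gives $z\in\mathcal{C}^\infty(\overline{\I},\R^p)$ and closes the argument.
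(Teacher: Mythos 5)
Your proof is correct, and it diverges from the paper's in a way worth recording. The paper denotes the right-hand side (your $\mathcal S$) by $\widetilde{\B}$, proves $\widetilde{\B}$ is closed in $L^2$ by decomposing it as the direct sum of the finite-dimensional space of free responses and the graph of the bounded operator $CS+D$ (with $S$ the variation-of-constants solution operator), observes that $\B$ and $\widetilde{\B}$ have the same smooth elements, and then concludes equality of the two closed subspaces by ``a density argument.'' Your inclusion $\mathcal S\subseteq\B$ is essentially that density argument made explicit on one side: smooth approximation of the input, continuity of $S$, and closedness of $\B$ from \Cref{lem:behavclosed}. Where you genuinely depart from the paper is in the inclusion $\B\subseteq\mathcal S$: rather than relying on density of smooth trajectories in the weak behavior $\B$ --- the step the paper leaves implicit and which is the least obvious part of its argument --- you build the particular state $\tilde x=Su$, reduce to the autonomous residual $z$ with $P\tderb z=0$, and obtain its smoothness from the adjugate identity, which yields the scalar equations $(\det P)\tderb z_i=0$ and hence real-analyticity of $z$. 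This buys a self-contained regularity proof exactly where the paper is terse, at the price of invoking the input-output representation~\eqref{eq:io} for weak solutions (legitimate, since it is obtained from the kernel representation by unimodular left-multiplication, which preserves weak solutions in both directions) together with the nonsingularity of $P$; conversely, the paper's closedness decomposition of $\widetilde{\B}$ plays no role in your argument, since you only ever need the closedness of $\B$ itself.
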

\begin{proof}
    Denote the set on the right hand side of~\eqref{eq:isobehav} by $\widetilde{\B}$. We show that $\widetilde{\B}$ is a closed subspace of $L^2(\I,\R^{m+p})$. To this end consider the solution operator $S:L^2(\I, \R^m)\rightarrow H^1(\I,\R^n)$ defined by 
    \begin{equation}
        (Su)(t) := \int_{-1}^t \exp(A(t-\tau)) B u(\tau)\,\mathrm d\tau.
    \end{equation}
  From the definition of $\widetilde{\B}$ it follows that $\col(u,y)\in\widetilde{\B}$ if and only if there exists $x^0\in\R^d$ such that $x=\exp(A(\cdot+1))x^0 + Su$ with $y=Cx + Du$. Therefore, $\widetilde{\B}$ is the direct sum of the finite-dimensional space
  \begin{equation*}
    \widetilde{\B}_0 := \{\col(0,C\exp(A(\cdot+1))x^0)\,|\, x^0\in\R^n\}
  \end{equation*}
  and
    \begin{equation}
        \widetilde{\B}_1 := \{\col(u,(CS+D)u)\,|\, u\in L^2(\I,\R^m)\}.
    \end{equation}
    The space $\widetilde{\B}_1$ is closed in $L^2(\I,\R^{m+p})$ as it is the graph of the bounded linear operator $(CS+D):L^2(\I,\R^{m})\rightarrow L^2(\I,\R^{p})$. This shows the closedness.

    The assertion follows with $\B\cap \mathcal C^\infty(\I,\R^{m+p})=\widetilde{\B}\cap \mathcal C^\infty(\I,\R^{m+p})$ and a density argument. \qedhere
\end{proof}

\begin{remark}
    Obtaining a kernel representation \eqref{eq:ker} from an input-state-output one can be achieved by \emph{elimination} of the state variable $x$, see Section 6.2.2 of \cite{yellowbook}.
\end{remark}

In this paper we use a couple of integer system invariants. The \emph{McMillan degree} of $\B$, denoted $\n(\B)$, is the minimal dimension of the state space among all possible input-state-output representations~\eqref{eq:sys} of $\B$. If the state space dimension equals $\n(\B)$, this particular input-state-output representation is said to be %referred to as 
\emph{minimal}. Define 
\begin{equation}\label{eq:Ok}
\mathcal{O}_k:=\begin{cases}
C&\mbox{ if } k=0\\
\begin{bmatrix} \mathcal{O}_{k-1}\\CA^{k}\end{bmatrix}&\mbox{ if } k\geq 1 
\end{cases}\; ;
\end{equation}
the \emph{system lag}, denoted $\mathfrak{l}(\B)$, is defined by 
\begin{equation*}
\mathfrak{l}(\B):=\min \{k\in\N \mid \rank~ \mathcal{O}_k=\rank~ \mathcal{O}_{k-1} \}\; .
\end{equation*}
Evidently, $\mathfrak{l}(\B)\leq \n(\B)$. Further, $\mathfrak{l}(\B)$ is the highest order of differentiation in a ``shortest lag" description of $\B$, see pp. 569-570 of \cite{Willems86}.

Given an input-state-output representation of $\B$ the state variable $x$ is called \emph{observable}, if it can be recovered from the input-output trajectory, i.e.,\ there is a polynomial matrix $F\in\R^{n\times q}[s]$ such that for all $w=\col(u,y)\in \B\cap \mathcal C^\infty(\overline{\I},\R^q)$
\begin{equation}
    \label{eq:observ}
    x = F\derb w.
\end{equation}
Observability of the state variable is equivalent to $(A,C)$ being observable in the usual sense (see e.g.\ \cite{Trentelman2001}), which is satisfied if $\mathcal O_{(n-1)} = n$. For an observable pair $(A,C)$, the {lag} $\lag(\B)$ is the observability index of the pair.

In a manner akin to observability, a connection  between behavioral controllability and controllability of the pair $(A, B)$ can be established in terms of input-state-output representations, see, e.g.,\ \cite{Trentelman2001}.

\begin{lemma}
    Suppose $\B$ is controllable and consider a minimal input-state-output representation~\eqref{eq:sys} of $\B$.
    Then $(A,B)$ is controllable and $(A,C)$ is observable.
\end{lemma}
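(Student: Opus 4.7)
The plan is to establish observability of $(A,C)$ first using only minimality of the representation, and then combine this with the behavioral controllability assumption to deduce reachability of $(A,B)$. The key device for the second step is to apply \eqref{eq:patchingup} to glue a trajectory whose ``uncontrollable mode'' vanishes to one in which it is active, and to derive a contradiction from the continuity of the underlying state.

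For $(A,C)$, I argue by contradiction from minimality. If the unobservable subspace $\mathcal{U}:=\bigcap_{k\ge 0}\ker(CA^k)$ were nonzero, then it would be $A$-invariant, and choosing any complement $W$ of $\mathcal{U}$ in $\R^n$ would yield the block form
\begin{equation*}
A=\begin{pmatrix} A_{11}&0\\A_{21}&A_{22}\end{pmatrix},\quad B=\begin{pmatrix}B_1\\B_2\end{pmatrix},\quad C=\begin{pmatrix}C_1&0\end{pmatrix}.
\end{equation*}
A direct verification, projecting any state to its $W$-component in one direction and lifting via $x_2(t)=\int_{-1}^t e^{A_{22}(t-\tau)}(A_{21}x_1(\tau)+B_2 u(\tau))\,\diff\tau$ in the other, shows that $(A_{11},B_1,C_1,D)$ realizes the same external behavior $\B$, contradicting $\n(\B)=n$.

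For $(A,B)$, suppose the reachable subspace is proper. The Kalman controllability decomposition yields
\begin{equation*}
A=\begin{pmatrix}A_{11}&A_{12}\\0&A_{22}\end{pmatrix},\quad B=\begin{pmatrix}B_1\\0\end{pmatrix},\quad C=\begin{pmatrix}C_1&C_2\end{pmatrix},
\end{equation*}
where $\dot x_2=A_{22}x_2$ is autonomous. I take $w_0\in\B$ with state having $x_2\equiv 0$ (e.g.\ $w_0\equiv 0$) and $w_1\in\B$ driven by $u_1\equiv 0$ with $x_2(-1)=\xi\neq 0$, so that $x_2(t)=e^{A_{22}(t+1)}\xi$ for $w_1$. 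Behavioral controllability then provides $t_1\in(0,1)$ and $w\in\B$ concatenating $w_0$ on $(-1,0]$ with $w_1(\cdot-t_1)$ on $[0,1)$. By \Cref{lem:sys}, $w$ admits a state $x\in H^1(\I,\R^n)$, and by the observability just established this $x$ is the unique such state; in particular it is continuous. Matching with the states of $w_0$ and of $w_1(\cdot-t_1)$ on either side of $0$ would force $x_2(0^-)=0$ and $x_2(0^+)=e^{A_{22}(1-t_1)}\xi\neq 0$, contradicting continuity.

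The main obstacle is reconciling the $L^2$/weak-solution framework with the pointwise matching at the glue point: since $u$ and $y$ may jump at $t=0$, the matching must be phrased at the level of the $H^1$-state rather than of $w$ itself. This is precisely why observability has to be established first — it delivers the uniqueness of the state from $(u,y)$, so that the unique continuous $H^1$-state of the patched trajectory must agree with the states of $w_0$ and of $w_1(\cdot-t_1)$ on the respective subintervals, and the mismatch at $t=0$ closes the argument.
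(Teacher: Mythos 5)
Your argument is correct, but it takes a genuinely different route from the paper's on both halves. For observability, the paper simply invokes the characterization from \cite{Rap97} that a representation is minimal if and only if $(A,C)$ is observable and the representation is state trim; you instead reprove observability from scratch via the unobservable-subspace decomposition and a dimension-reduction contradiction with $\n(\B)=n$, which is more self-contained at the price of checking (as you indicate, via \Cref{lem:sys}) that the reduced triple is again an input-state-output representation of $\B$ in the weak $L^2$ sense. For controllability of $(A,B)$, the paper argues directly: state trimness produces smooth trajectories whose states pass through arbitrary prescribed $x^0$ and $x^1$, behavioral controllability patches them via \eqref{eq:patchingup}, and the polynomial state map $x=F\tderb w$ of \eqref{eq:observ} is evaluated at the two endpoints to conclude that the patched state transfers $x^0$ to $x^1$. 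You argue by contradiction through the Kalman controllability decomposition: an autonomous mode $x_2$ would have to jump at the glue point, which is incompatible with the $H^1$-continuity and the observability-based uniqueness of the state. Both proofs hinge on the same two ingredients --- the patching property and recoverability of the state from $(u,y)$ --- but your continuity obstruction avoids the smooth state map $F$ and works directly in the $L^2$/$H^1$ framework, which is arguably cleaner here. The one step you should make explicit is the existence of $w_1\in\B$ whose state satisfies $x_2(-1)=\xi\neq 0$: this is not a consequence of the decomposition alone, but it does follow immediately from \Cref{lem:sys}, since the free responses $\col(0,C\exp(A(\cdot+1))x^0)$ belong to $\B$ for every $x^0\in\R^n$ (this is exactly the state-trimness point where the paper instead leans on \cite{Rap97}).
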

\begin{proof}
   An input-state-output representation of $\B$ is minimal if and only if $(A,C)$ is observable the input-state-output representation is \emph{state trim}, i.e.,\ for all $x^0\in\R^n$ there is $\col(u,y)\in\B\cap \mathcal C^\infty(\overline{\I},\R^q)$ and $x\in \mathcal C^\infty(\overline{\I},\R^n)$ such that \eqref{eq:sys} and $x(0)=x^0$ hold, cf.\ \cite{Rap97}. We only need to show the controllability of $(A,B)$, that is for arbitrary initial value $x^0\in\R^n$ and terminal value $x^1\in\R^n$ there is a control input $u$ and a time instance $t_1\in(0,1)$ such that the state solution $x$ of $\tfrac{\diff}{\diff t}x = Ax+Bu$ satisfies $x(0)=x^0$ and $x(t_1)=x^1$. By state trimness we find $w_0=\col(u_0,y_0)$,~$w_1=\col(u_1,y_1)\in\B\cap \mathcal C^\infty(\overline{\I},\R^q)$ and corresponding states $x_0, x_1\in \mathcal C^\infty(\overline{\I},\R^n)$ with $x_0(0)=x^0$ and $x_1(0)=x^1$. Since $\B$ is controllable, there further exists $w=\col(u,y)\in\B\cap\mathcal C^\infty(\overline{\I},\R^n)$
   and $t_1\in (0,1)$ satisfying~\eqref{eq:patchingup}, cf.\ Theorem~5.2.9 in \cite{yellowbook}. 
   With the observability of the state~\eqref{eq:observ} we see
   \begin{align*}
    x(0) &= \left(F\tderb w\right)(0) = \left(F\tderb w_0\right)(0) = x(0)=x^0,\\
    x(t_1) &= \left(F\tderb w\right)(t_1) = \left(F\tderb w_1\right)(0) = x_1(0) = x^1.\qedhere
   \end{align*}
\end{proof}

For a linear differential behavior controllability is equivalent to the existence of an \emph{image representation} (see Theorem 6.6.1 p.\ 229 \emph{ibid.}), i.e.,\ there exists a polynomial matrix $M\in\R^{(m+p)\times m}[s]$ such that
$w\in\B\cap \mathcal C^\infty(\overline{\I},\R^{m+p})$ if and only if there exists a \emph{latent variable} trajectory $\ell\in\mathcal C^\infty(\overline{\I},\R^{m})$ such that 
\begin{equation}\label{eq:image}
   w=M\derb \ell.
\end{equation}

A flat output of a differentially flat system leads to a specific image representation endowed with helpful characteristics.
\begin{lemma}
    \label{lem:flatimage}
    Suppose that $\B$ is controllable. Then there exists $M\in\R^{q\times m}[s]$ with $\deg(M)\leq \n(\B)+1$ such that 
    \begin{equation}
        \label{eq:imagebehav}
        \mathcal B = \left\{\col(u,y)\in L^2(\I,\R^{m+p})\,\middle|\, \begin{aligned} &\exists~\ell\in L^2(\I,\R^m)\ \\&\text{s.t. \eqref{eq:image} holds}\end{aligned}\right\}.
    \end{equation}
    Moreover, given $w\in H^k(\I,\R^q)$ for some $k\in\N$ the latent variable in \eqref{eq:image} satisfies $\ell\in H^{k+1}(\I,\R^m)$.
\end{lemma}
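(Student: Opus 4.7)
My plan is to construct $M$ explicitly via the Brunovsky canonical form of a minimal input-state-output representation of $\B$, and then to extend the resulting image representation from smooth trajectories to $L^2$ by a density argument patterned after the proof of \Cref{lem:sys}.

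By the previous lemma, $\B$ admits a minimal representation~\eqref{eq:sys} with $(A,B)$ controllable and $(A,C)$ observable of state dimension $n=\n(\B)$. A classical state-and-feedback transformation puts $(A,B)$ into Brunovsky canonical form with controllability indices $\kappa_1,\ldots,\kappa_m\geq 0$ summing to $n$, in which the $i$-th transformed input drives a pure integrator chain of length $\kappa_i$. Writing $\tilde z_i$ for the flat output of the $i$-th chain (so that the transformed input is $\tilde u_i=\tilde z_i^{(\kappa_i)}$) and introducing the primitive $\ell_i(t):=\int_{-1}^t \tilde z_i(\tau)\,\diff\tau$, one has $\tilde x_{i,j}=\ell_i^{(j)}$ and $\tilde u_i=\ell_i^{(\kappa_i+1)}$. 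Undoing the coordinate and feedback change expresses $u$ and $y$ as linear combinations of derivatives of $\ell$ of orders at most $\max_i(\kappa_i+1)\leq n+1$, yielding $w=M\derb\ell$ for some $M\in\R^{q\times m}[s]$ with $\deg M\leq\n(\B)+1$. Applying this construction to an arbitrary smooth $\ell$ also produces the polynomial identity $RM=0$, since the resulting $w$ lies in $\B$ so that $R\derb w=(RM)\derb\ell$ must vanish identically.

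Denote by $\widetilde\B$ the right-hand side of~\eqref{eq:imagebehav}. The inclusion $\widetilde\B\subseteq\B$ is immediate from $RM=0$: any $\ell\in L^2$ with $M\derb\ell\in L^2$ yields $R\derb w=(RM)\derb\ell=0$ weakly, hence $w\in\B$. For $\B\subseteq\widetilde\B$ I mimic the proof of \Cref{lem:sys}: the recovery map $w\mapsto\ell$ decomposes into (i) observability-based reconstruction of the initial state, (ii) the state-space flow $L^2\ni u\mapsto x\in H^1$, and (iii) a single integration of the flat-output projection of the transformed state, and is therefore continuous from $L^2(\I,\R^q)$ into a product of Sobolev spaces in which $M\derb$ maps continuously back into $L^2$. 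Approximating $w\in\B$ by smooth $w_n\in\B\cap\mathcal C^\infty(\overline{\I},\R^q)$ and passing to the limit in $w_n=M\derb\ell_n$ produces the desired $\ell\in L^2$. The Sobolev regularity claim is a byproduct of the same construction: if $w\in H^k$ then $u\in H^k$ and the relation $\tilde u_i=\ell_i^{(\kappa_i+1)}$ give $\ell_i\in H^{k+\kappa_i+1}\subseteq H^{k+1}$.

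The main obstacle is the $L^2$-level identification, where one must track the Sobolev regularity of $\ell$ carefully to justify passing to the limit in the high-order differential operator $M\derb$. The extra integration built into the definition of $\ell$ (which is precisely what accounts for the ``$+1$'' in the degree bound $\deg M\leq\n(\B)+1$) is exactly what guarantees that $\ell$ gains enough Sobolev regularity for both the limit argument and the final regularity statement to go through.
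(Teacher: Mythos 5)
Your overall architecture coincides with the paper's: a flat-output-based image representation with $\deg M\leq\n(\B)+1$, followed by continuity of the recovery map $w\mapsto\ell$ and a closedness/density argument. The paper simply cites the flat-output parametrization $x=X\tderb\ell$, $u=U\tderb\ell$ with $\ell=\widetilde Cx$ from L\'evine--Nguyen and Sira-Ramirez instead of building it from the Brunovsky form, and its ``$+1$'' in both the degree bound and the Sobolev gain comes from the state equation ($x=Su\in H^{k+1}$ for $u\in H^k$) rather than from an extra primitive; your explicit construction is a legitimate self-contained substitute, and your inclusion $\widetilde\B\subseteq\B$ via $RM=0$ and the regularity count $\ell_i\in H^{k+\kappa_i+1}\subseteq H^{k+1}$ are fine.

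The gap is in the limit passage for $\B\subseteq\widetilde\B$. You assert that $w\mapsto\ell$ is continuous ``into a product of Sobolev spaces in which $M\derb$ maps continuously back into $L^2$,'' and you attribute this to the single extra integration in the definition of $\ell_i$. But that integration buys exactly one derivative: the recovery map lands only in $H^2(\I,\R^m)$ (the state flow gives $L^2\to H^1$, the primitive one more order), while $M$ has degree up to $\n(\B)+1$, so $M\derb$ is \emph{not} continuous from $H^2$ to $L^2$ once $\n(\B)\geq 2$. The mechanism you name for closing what you yourself call ``the main obstacle'' therefore does not work. The correct mechanism --- and the one the paper uses --- avoids continuity of $M\derb$ altogether: one only needs that the joint set $\B_\ell=\{\col(u,y,\ell)\in L^2(\I,\R^{q+m})\mid w=M\tderb\ell \text{ weakly}\}$ is closed in $L^2$, which is \Cref{lem:behavclosed} applied to the kernel description $\begin{bmatrix}I&-M(s)\end{bmatrix}$. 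Combined with the $L^2$-continuity of $w\mapsto\ell$ \emph{on $\B$} (not on all of $L^2(\I,\R^q)$; the reconstruction of the initial state uses membership in $\B$), this lets you pass to the limit in $w_n=M\tderb\ell_n$ with both sequences converging merely in $L^2$, and no Sobolev bookkeeping on $M\derb$ is needed. With that repair your proof goes through.
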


\begin{proof}
    We consider a minimal input-state-output representation~\eqref{eq:sys} of $\B$, that is $n=\n(\B)$ and  $(A,B)$ is controllable. Then there is a \emph{flat output} defined by
    \begin{equation}
        \label{eq:flatoutput}
        \ell = \widetilde C x
    \end{equation} with output matrix $\widetilde C\in\R^{m\times n}$, see pp.\ 84--ff.\ in \cite{SiraRamirez2018} and Remark~2 p.\ 72 of \cite{Levine2003}. In more detail, there are polynomial matrices $X\in\R^{n\times m}[s]$, $U\in\R^{m\times m}[s]$ with $\deg(X)\leq n$ and $\deg(U)\leq n+1$ such that, given $w=\col(u,y)\in\B\cap \mathcal C^\infty(\overline{\I},\R^q)$ with corresponding state $x\in\mathcal C^\infty(\overline{\I},\R^n)$,
    \begin{equation*}
        x = X\tderb\ell,\quad u=U\tderb\ell, \quad y= (CX\tderb + DU\tderb)\ell.
    \end{equation*}
    The associated image representation~\eqref{eq:image} is established through
    \begin{equation}
        \label{eq:image_flat}
        M = \begin{bmatrix}
            U\\
            CX + DU
        \end{bmatrix}
    \end{equation}
    and the  flat output $\ell$ serves as latent variable.

    We derive~\eqref{eq:imagebehav} for this particular $M$. Denote the set on the right side of~\eqref{eq:imagebehav} by $\widetilde{\B}$. We show that $\widetilde{\B}$ is closed in $L^2(\I,\R^{q})$. Let $(\col(u_k,y_k))_{k\in\N}$ be a sequence in $\widetilde{\B}$ which converges in $L^2(\I,\R^{q})$ to some $\col(u,y)$. The  state and latent variable corresponding to $\col(u_k,y_k)$ are denoted by $x_k$ and $\ell_k$, respectively. As the latent variable is given via a flat output (see \eqref{eq:flatoutput}) we find
    \begin{equation*}
        \ell_k = \widetilde C x_k = \widetilde C Su_k
    \end{equation*}
    where $S$ is the solution operator defined in the proof of \Cref{lem:sys}. The convergence of $(u_k)_{k\in\N}$ and the boundedness of $S$ imply that $(\ell_k)_{k\in\N}$ converges to some $\ell\in L^2(\I,\R^{m})$. With \Cref{lem:behavclosed} we know that
    \begin{equation*}
        \B_\ell =\{\col(u,y,\ell)\in L^2(\I,\R^{q+m})\,|\, \text{\eqref{eq:image} holds}\}
    \end{equation*}
    is closed in $L^2(\I,\R^{q+m})$. Therefore, $\col(u,y,\ell)\in\B_\ell$ and $\col(u,y)\in \widetilde{\B}$. Since $\widetilde \B\cap \mathcal C^\infty(\overline{\I},\R^{q})$ and $\B\cap \mathcal C^\infty(\overline{\I},\R^{q})$ coincide, a density argument yields~\eqref{eq:imagebehav}.

    Moreover, if $\col(u,y)\in H^k(\I,\R^q)$, then the corresponding state satisfies $x=Su\in H^{k+1}(\I,\R^n)$ and, thus, $\ell=\widetilde Cx \in H^{k+1}(\I,\R^m)$.
\end{proof}

\subsection{Polynomial lift}\label{subsec:polylift}

Let $(\pi_i)_{i\in\R}$ be the sequence of Legendre polynomials with normalization $\pi_i(1)=1$, $i\in\N$, which forms an orthogonal basis of $L^2(\I,\R)$. Recall that $\pi_0(t)=1$, $\pi_1(t)=t$ and for $i=1,\ldots$ 
\[
\pi_{i+1}(t)=\frac{2i+1}{i+1}t \pi_i(t)-i\pi_{i-1}(t)\; .
\]
Given $f\in L^2(\I,\R)$ there is a unique series expansion 
\begin{equation}
    f = \sum_{i\in\N} \hat f_i \pi_i,
\end{equation}
where $\hat f_i:=\scp{f,\pi_i} \norm{\pi_i}^{-2}$ and $\hat f:= (\hat f_i)_{i\in\N}$. From Bessel's theorem it follows that $\hat f\in \ell^2(\N,\R)$.

Any function $f\in L^2(\I,\R^d)$ with dimension $d\geq 1$ can be represented with coefficients defined by
\begin{equation}
    \hat f_i := \sum_{k=0}^{d-1} \frac{\scp{f,e_k\pi_i}}{\norm{\pi_i}^2} e_k,
\end{equation}
where $\{e_0,\dots e_{d-1}\}$ is the canonical basis of $\R^d$ and $\hat f= (f_i)_{i\in\N}\in \ell^2(\N,\R^d)$. We define 
\begin{equation}
    \Pi:L^2(\I,\R^d)\rightarrow \ell^2(\N,\R^d),\quad f\mapsto \hat f,
\end{equation} which is an isometric isomorphism.

The differential operator $\frac{\mathrm d}{\mathrm dt}$ on $H^1(\I,\R^d)$ can be represented as an operator $\mathcal D$ acting in $\ell^2(\N,\R^d)$,~\cite[Equation~(2.3.18)]{canuto2007spectral}. For functions $f\in H^1(\I,\R^d)$ with $\hat f = \Pi f$ and $\widehat{f^{(1)}}:=\Pi \frac{\mathrm d f}{\mathrm dt}$ one has
\begin{equation}
    \label{eq:infMat}
    (\widehat{f^{(1)}})_i = (\mathcal D \hat f)_i := (2i+1)\sum_{\substack{j=i+1\\i+j\text{ odd}}}^\infty \hat f_j,\quad i\in\N,
\end{equation}
or equivalently written by means of an infinite matrix
\begin{equation}
    \label{eq:Dmat}
    \begin{bmatrix}
        (\widehat{f^{(1)}})_0\\
        (\widehat{f^{(1)}})_1\\
        (\widehat{f^{(1)}})_2\\
        \vdots
    \end{bmatrix}= \begin{bmatrix}
         0 & I & 0 & I &0 & I & \dots\\
         & 0 & 3I & 0 & 3I & 0 & \dots\\
         && 0 & 5I & 0 & 5I & \dots\\
         &&& 0 & 7I & 0 & \dots\\
         &&&& \ddots & \ddots & \ddots
     \end{bmatrix}\begin{bmatrix}
         \hat f_0\\\hat f_1\\\hat f_2\\\vdots
     \end{bmatrix}.
\end{equation}
Similarly to the differential operator $\tfrac{\mathrm d}{\mathrm dt}$ one can define powers and polynomials of $\mathcal D$.

Employing the kernel representation~\eqref{eq:ker} we find the following characterization of the behavior.
\begin{lemma}[Behavioral lift]
\label{lem:lift}
    Let $w\in \mathcal C^\infty(\overline{\I},\R^{m+p})$. Then $w\in\B$ if and only if
    \begin{equation}
        R(\mathcal D) \Pi w = 0.
    \end{equation}
\end{lemma}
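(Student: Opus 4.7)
The plan is to reduce the statement to a straightforward commutation identity between $\Pi$ and the polynomial differential operator, exploiting the fact that $\Pi$ is an isometric isomorphism between $L^2(\I,\R^d)$ and $\ell^2(\N,\R^d)$.

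First, since $w\in \mathcal C^\infty(\overline{\I},\R^{m+p})$ is in particular in $L^2(\I,\R^{m+p})$, the observation made just after~\eqref{eq:weak} tells us that $w\in\B$ is equivalent to the pointwise (and hence $L^2$) identity $R(\tfrac{\diff}{\diff t})w=0$. Applying the isometric isomorphism $\Pi$ to both sides, this is equivalent to $\Pi\bigl(R(\tfrac{\diff}{\diff t})w\bigr)=0$ in $\ell^2(\N,\R^g)$. It therefore suffices to prove the operator identity
\begin{equation*}
\Pi\bigl(R(\tfrac{\diff}{\diff t})w\bigr)=R(\mathcal D)\,\Pi w
\end{equation*}
for every $w\in \mathcal C^\infty(\overline{\I},\R^{m+p})$.

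For this I would proceed in two steps. Step one: the defining formula~\eqref{eq:infMat} gives, for any $f\in H^1(\I,\R^d)$, the commutation $\Pi \tfrac{\diff}{\diff t}f=\mathcal D\,\Pi f$. Since $w$ is smooth up to the boundary, every derivative $w^{(i-1)}$ lies in $H^1(\I,\R^{m+p})$, so induction on $i$ yields $\Pi w^{(i)}=\mathcal D^{i}\Pi w$ for all $i\in\N$. Step two: the componentwise definition of $\Pi$ makes it commute with left multiplication by a constant matrix, i.e., $\Pi(R_i v)=R_i\,\Pi v$ for $v\in L^2(\I,\R^{q})$ and $R_i\in\R^{g\times q}$. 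Combining linearity of $\Pi$ with the two steps gives
\begin{equation*}
\Pi\bigl(R(\tfrac{\diff}{\diff t})w\bigr)=\Pi\sum_{i=0}^r R_i w^{(i)}=\sum_{i=0}^r R_i\,\mathcal D^{i}\Pi w=R(\mathcal D)\,\Pi w,
\end{equation*}
which, chained with the equivalences from the first paragraph, proves the lemma.

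I do not expect any substantial obstacle. The only points requiring a little care are the justification that \eqref{eq:infMat}, originally stated on $H^1$, may be iterated on smooth functions (immediate since $w^{(i-1)}\in H^1$), and the bookkeeping that the polynomial $R(\mathcal D)$ obtained by plugging $\mathcal D$ into the matrix polynomial $R$ really equals $\sum_i R_i \mathcal D^{i}$ acting blockwise on $\ell^2(\N,\R^{q})$; both are routine.
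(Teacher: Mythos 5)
Your proof is correct. The paper in fact gives no proof of this lemma --- it is stated as an immediate consequence of the construction of $\Pi$ and $\mathcal D$ --- and your argument (pointwise equivalence for smooth $w$, injectivity of $\Pi$, and the commutation $\Pi\,\tfrac{\diff}{\diff t}=\mathcal D\,\Pi$ iterated on $H^1$ together with commutation with the constant coefficient matrices $R_i$) is exactly the intended justification, with the domain issues for powers of $\mathcal D$ handled appropriately.
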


\begin{example}[No finite expansion]\label{ex:nofin}
    We show for the linear time-invariant system described by \eqref{eq:sys} with $A=C=I$ and $B=D=0$ that its trajectories with nontrivial output have  no series expansion involving only finitely many polynomials $\pi_i$. Let $\col(u,y)\in\B$ and assume that $y$ has a finite expansion, i.e.\ there is some $N\in\N$ such that $\hat y_i=0$ for $i\geq N$. It is no restriction to assume $\col(u,y)\in\B\cap \mathcal C^\infty(\overline{\I},\R^q)$. Note that the kernel representation~\eqref{eq:ker} of $\B$ is given via $R(s)=\begin{bmatrix}
        0 & (s-1)I
    \end{bmatrix}$. With \Cref{lem:lift} and the definition of $\mathcal D$ in~\eqref{eq:infMat} we see that
    \begin{equation}
    \label{eq:summ}
    \hat y_i = (\mathcal D\hat y)_i = (2i+1)\sum_{\substack{j=i+1\\i+j\text{ odd}}}^\infty \hat y_j,\quad i\in\N.
    \end{equation}
    The finiteness of the expansion yields that for $i=N-1$ all summands on the right hand side in~\eqref{eq:summ} vanish and, hence, $\hat y_{N-1}=0$. It is not difficult to see that this successively implies $\hat y_i = 0$ for all $i\in\N$. Therefore $y$ is trivial.\hfill \qed
\end{example}

In \Cref{ex:nofin} we illustrated the case when the dynamics of  a particular linear time-invariant system \eqref{eq:sys} cannot be described by a finite representation in terms of Legendre polynomials. This is a generic situation: the Legendre series representation of  exponential functions $e^{\lambda t}$, $\lambda\neq 0$ (generically present  in the solution of \eqref{eq:sys}, e.g. in the free response), involve an infinite number of terms (see e.g. p. 39 of \cite{Lyust63}). Such considerations lead naturally to working with truncated Legendre expansions of solutions of \eqref{eq:sys}. 

\subsection{Truncated expansion}
\label{subsec:trunc}
In the light of \Cref{ex:nofin}, we study approximation bounds when considering truncated series of Legendre polynomials. To this end we introduce the orthogonal projection $P_N: L^2(\I, \R^d)\rightarrow L^2(\I, \R^d)$ defined by 
\begin{equation}
    P_N f := \sum_{i<N} \hat f_i \pi_i,
\end{equation}
Note that $\im P_N$ coincides with the $N$-dimensional space of $\R^{d}[s]$-polynomials of degree up to $N-1$, which is spanned by $\pi_0,\dots, \pi_{N-1}$.
Since $(\pi_i)_{i\in\N}$ is an orthogonal basis, $P_N f$ converges to $f$ as $N\rightarrow\infty$ with respect to the $L^2$-norm.  The speed of convergence is related to the smoothness of~$f$, as we discuss in the following.

    Recall that $\pi_i$ is an eigenfunction corresponding to the $i$th eigenvalue $\lambda_i := i(i+1)$ of the Sturm--Liouville operator
\begin{equation*}
    \begin{split}
    \mathcal Lf &= \ell f:= -\frac{\mathrm d}{\mathrm dt}\left( p\frac{\mathrm d}{\mathrm dt} f\right),\quad p(t) = (1-t^2),\\
    D(\mathcal L) &:= \left\{f\in L^2(\I,\C)\,\middle|\, \begin{aligned}f,pf^{(1)}\in\mathcal {AC}(\I,\R),\\ \ell\in L^2(\I,\C),\\ pf^{(1)}(-1) = pf^{(1)}(1) = 0\end{aligned}\right\},
\end{split}
\end{equation*}
see \cite[Theorem~3.6]{littlejohn2011legendre}. Here, $\mathcal{AC}(\I,\C)$ denotes the space of locally absolutely continuous functions from $\I$ to $\C$. Let $\mathcal L^s$ for $s\in (0,\infty)$ denote the $s$th power the self-adjoint operator $\mathcal L$, defined via functional calculus, see e.g.\ Section~5.3 in \cite{schmudgen2012unbounded}.
\begin{lemma}
    If $f\in D(\mathcal L^s)$ for some $s>0$, then
    \begin{equation}
        \norm*{(I-P_N)f} = \norm{\mathcal L^{s}f}\mathcal O(N^{-2s})\qquad (N\rightarrow \infty).
    \end{equation}
    Moreover, $H^k(\I,\C)\subset D(\mathcal L^{\frac{k}{2}})$ for $k\in\N$.
\end{lemma}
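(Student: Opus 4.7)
The plan is to exploit that $(\pi_i/\norm{\pi_i})_{i\in\N}$ is an orthonormal basis of eigenfunctions of the self-adjoint operator $\mathcal L$ with eigenvalues $\lambda_i=i(i+1)$, so that the functional calculus yields $\norm{\mathcal L^s f}^2 = \sum_{i\in\N}\lambda_i^{2s}\,|\hat f_i|^2\,\norm{\pi_i}^2$ for every $f\in D(\mathcal L^s)$.

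For the first assertion I would simply estimate
\begin{equation*}
    \norm{(I-P_N)f}^2 = \sum_{i\ge N}|\hat f_i|^2\norm{\pi_i}^2 \le \lambda_N^{-2s}\sum_{i\ge N}\lambda_i^{2s}|\hat f_i|^2\norm{\pi_i}^2\le \lambda_N^{-2s}\norm{\mathcal L^s f}^2,
\end{equation*}
exploiting the monotonicity $\lambda_i\ge\lambda_N$ for $i\ge N$. Since $\lambda_N=N(N+1)\ge N^2$, this delivers $\norm{(I-P_N)f}\le\lambda_N^{-s}\norm{\mathcal L^s f}$, which is the claimed $\mathcal O(N^{-2s})$ bound.

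For the inclusion $H^k(\I,\C)\subset D(\mathcal L^{k/2})$ I would split by the parity of $k$. In the even case $k=2m$, two integrations by parts with a test function $g\in\mathcal C^\infty([-1,1])$ yield $\scp{f,\mathcal L g}=\scp{\mathcal L f,g}$ for every $f\in H^2$; the crucial point is that both boundary terms carry the factor $p(t)=1-t^2$, which vanishes at $\pm 1$, so no boundary conditions on $f$ are needed. Iterating $m$ times with $g=\pi_i$ gives $\lambda_i^m\scp{f,\pi_i}=\scp{\mathcal L^m f,\pi_i}$ for $f\in H^{2m}$, where $\mathcal L^m$ is read off classically as a linear differential operator of order $2m$ with polynomial coefficients, hence maps $H^{2m}$ into $L^2$. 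Parseval then delivers $\sum_{i\in\N}\lambda_i^{2m}|\hat f_i|^2\norm{\pi_i}^2=\norm{\mathcal L^m f}^2<\infty$, that is $f\in D(\mathcal L^m)$. In the odd case $k=2m+1$ I would first establish the Dirichlet form identity $\norm{\mathcal L^{1/2}g}^2=\int_\I(1-t^2)|g'|^2\,\diff t\le\norm{g'}^2$ for $g\in H^1$ (one integration by parts against $\pi_i$, the boundary term again annihilated by $1-t^2$), and then apply it to $g=\mathcal L^m f\in H^1$, which follows from $f\in H^{2m+1}$; this yields $\sum_{i\in\N}\lambda_i^{2m+1}|\hat f_i|^2\norm{\pi_i}^2=\norm{\mathcal L^{1/2}\mathcal L^m f}^2\le\norm{(\mathcal L^m f)'}^2<\infty$.

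The main technical obstacle I anticipate is making the iterated integration by parts rigorous for merely $H^k$-regular $f$ and identifying the formal differential operator $\mathcal L^m$ with the functional-calculus power on $H^{2m}$. Both are handled by a density argument: approximate $f$ in the $H^{2m}$-norm by $\mathcal C^\infty([-1,1])$ functions, carry out the classical integration by parts there, and pass to the limit using continuity of the classical operator $\mathcal L^m\colon H^{2m}\to L^2$ and of the inner products against the fixed functions $\pi_i$ and $\mathcal L^m\pi_i$.
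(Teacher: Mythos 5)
Your proof of the error bound is essentially the paper's: expand in the orthonormal eigenbasis $\tilde\pi_i=\pi_i/\norm{\pi_i}$, trade $\scp{f,\tilde\pi_i}$ for $\lambda_i^{-s}\scp{\mathcal L^sf,\tilde\pi_i}$ via self-adjointness, and use monotonicity of $\lambda_i$. For the Sobolev inclusion, however, you take a genuinely different route. The paper argues on the level of operator domains: for even $k$ it verifies directly that $f\in H^k$ lies in $D(\mathcal L)$ (the boundary conditions $pf^{(1)}(\pm1)=0$ hold for free because $f^{(1)}$ is bounded and $p$ vanishes at the endpoints) and iterates using $\mathcal Lf\in H^{k-2}$, while the half-power is handled by citing a characterization of $D(\mathcal L^{1/2})$ as $\{f\in L^2\cap\mathcal{AC}\mid \sqrt{p}f^{(1)}\in L^2\}$ from the boundary-triple literature. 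You instead work with the spectral characterization $D(\mathcal L^{s})=\{f\mid\sum_i\lambda_i^{2s}\abs{\scp{f,\tilde\pi_i}}^2<\infty\}$ and verify the coefficient summability by iterated integration by parts, with the boundary terms killed by the weight $p(t)=1-t^2$; the odd case is reduced to the Dirichlet-form bound $\sum_i\lambda_i\abs{\scp{g,\tilde\pi_i}}^2\le\norm{g'}^2$, which follows from Bessel's inequality applied to the orthonormal system $\sqrt{p}\,\tilde\pi_i'/\sqrt{\lambda_i}$. Your version is more self-contained (no external domain characterization needed) at the price of leaning on the identification of the functional-calculus domain with the coefficient-decay condition — which is standard here since the Legendre polynomials form a complete orthogonal eigenbasis, and is in any case already implicit in the first half of both proofs. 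Both arguments are correct; your density step for extending the integration by parts from $\mathcal C^\infty(\overline{\I})$ to $H^{2m}$ is the right way to close the only real technical gap.
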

    \begin{proof}
        Let $\tilde \pi_i = \pi_i/\norm{\pi_i}$, i.e.\ $(\tilde \pi_i)_{i\in\N}$ form an orthonormal basis in $L^2(\I,\C)$. For $f\in D(\mathcal L^s)$ we have
        \begin{align*}
            &\norm{(I-P_N)f}^2 =
            \norm[\Bigg]{f-\sum_{i<N} \scp{f,\tilde\pi_i}\tilde\pi_i}^2\\
            &= \norm[\Bigg]{\sum_{i\geq N} \scp{f,\tilde \pi_i}\tilde \pi_i}^2
            =\norm[\Bigg]{\sum_{i\geq N} \lambda_i^{-s}\scp{f,\mathcal L^s\tilde\pi_i}\tilde\pi_i}^2 \\
            &=\norm[\Bigg]{\sum_{i\geq N} \lambda_i^{-s}\scp{\mathcal L^sf,\tilde\pi_i}\tilde\pi_i}^2.
        \end{align*}
        Since $(\lambda_i)_{i\in\N}$ is an increasing sequence, we find
        \begin{align*}
            \norm{(I-P_N)f}^2 &\leq \lambda_{N}^{-2s} \norm[\Bigg]{\sum_{i\geq N} \scp{\mathcal L^sf,\tilde\pi_i}\tilde\pi_i}^2 \\
            &\leq
            \big(N(N+1)\big)^{-2s} \norm{\mathcal L^s}^2,
        \end{align*}
        which shows the first claim. 
        
        We show the inclusion $H^k(\I,\C)\subset D(\mathcal L^{\frac{k}{2}})$, $k\in\N$. For $k=0$ there is nothing to prove. For $k=1$ we find that $D(\mathcal L^{\frac{1}{2}})=\{f\in L^2(\I,\C)\cap \mathcal {AC}(\I,\C)\,|\, \sqrt{p}f^{(1)}\in L^2(\I,\C)\}$ by \cite[Theorem~6.8.5~(i)]{behrndt2020boundary}. With the uniform boundedness of $p$ on $[-1,1]$ this shows $H^1(\I,\C)\subset D(\mathcal L^{\frac{1}{2}})$.
        
        We continue with the case of even $k\geq 2$. Let $f\in H^k(\I,\C)$. Then it is clear that $f$,~$pf^{(1)}\in\mathcal{AC}(\I,\C)$ and $\ell f \in L^2(\I,\C)$. Moreover, $f^{(1)}$ is bounded on $[-1,1]$ as $f^{(1)}\in H^1(\I,\C)$. Consequently, $f\in D(\mathcal L)$ and $\mathcal L f \in H^{k-2}(\I,\C)$. Repeating this argument yields $f\in D(\mathcal L^{\frac{k}{2}})$, showing $H^k(\I,\C)\subset D(\mathcal L^{\frac{k}{2}})$ for even $k$. 
        
        Finally, we consider $f\in H^{k+1}(\I,\C)$ for even $k\geq 2$. From the previous observations we know that $f\in D(\mathcal L^{\frac{k}{2}})$ and $\mathcal L^{\frac{k}{2}}f\in H^1(\I,\C)\subset D(\mathcal L^\frac{1}{2})$. Therefore, $f\in D(\mathcal L^\frac{k+1}{2})$, which shows $H^{k+1}(\I,\C)\subset D(\mathcal L^\frac{k+1}{2})$.
    \end{proof}

\begin{corollary}
    \label{lem:convRate}
    If $f\in H^k(\I,\R^d)$ for some $k\in\N$, then
    \begin{equation}
        \norm{(I-P_N)f} = \mathcal O(N^{-k})\qquad (N\rightarrow \infty).
    \end{equation}
\end{corollary}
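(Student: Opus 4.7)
The plan is to reduce the corollary directly to the preceding lemma by specializing $s=k/2$. The preceding lemma provides $\norm{(I-P_N)f} = \norm{\mathcal L^{s} f}\,\mathcal O(N^{-2s})$ for scalar complex $f \in D(\mathcal L^{s})$, together with the inclusion $H^k(\I,\C)\subset D(\mathcal L^{k/2})$. Setting $s=k/2$ yields the exponent $N^{-k}$ in the scalar complex case, which is already the quantitative heart of the statement.

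Next, I would lift the bound from scalar complex to vector real: write $f = (f_1,\dots,f_d)\in H^k(\I,\R^d)$ with each component $f_j\in H^k(\I,\R)\subset H^k(\I,\C)$. Since $P_N$ acts componentwise on vector-valued functions and the $L^2$-norm decomposes as $\norm{(I-P_N)f}^2 = \sum_{j=1}^d \norm{(I-P_N)f_j}^2$, applying the preceding lemma to each component gives $\norm{(I-P_N)f_j} = \norm{\mathcal L^{k/2}f_j}\,\mathcal O(N^{-k})$. Summing and taking square roots absorbs the finite constant $\bigl(\sum_j\norm{\mathcal L^{k/2}f_j}^2\bigr)^{1/2}$ into the implicit constant in $\mathcal O(N^{-k})$.

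There is essentially no obstacle; the only mild subtlety is noting that the Sturm--Liouville calculus in the preceding lemma was developed for complex scalar functions, so the vector-real case requires the componentwise extension sketched above. Everything else is a direct substitution $s=k/2$ into an already-proven bound.
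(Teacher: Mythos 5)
Your proposal is correct and matches the paper's (implicit) argument: the corollary is stated without a separate proof precisely because it follows from the preceding lemma by setting $s=k/2$ and using the inclusion $H^k(\I,\C)\subset D(\mathcal L^{k/2})$, with the componentwise reduction to the scalar case being routine since $P_N$ acts componentwise and the $L^2$-norm splits accordingly. The only pedantic remark is that the lemma's rate estimate requires $s>0$, so the case $k=0$ should be noted as trivial ($\norm{(I-P_N)f}\leq\norm{f}=\mathcal O(1)$), but this does not affect the substance of your argument.
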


\subsection{Polynomial trajectories}\label{subsec:polytraj}
Next we show that the space of polynomial trajectories $\B\cap \bigcup_{N\in\N}\im P_N$ is dense in $\B\cap H^s(\I,\R^q)$, $s\in\N$, and, particularly, in $\B$.
\begin{proposition}
    \label{lem:poly_traj1}
        Suppose $\B$ is controllable and let $w\in\B\cap H^{\n(\B)+s+k}(\I,\R^{m+p})$ for some $s\in\N\setminus\{0\}$, $k\in\N$. For all $N\in\N$, $N\geq \n(\B)+s+1$, there is $w^N\in\B\cap \im P_N$ such that \begin{equation*}
            \Lambda_{s}(w^N-w)(-1)=0
        \end{equation*}
        satisfying
    \begin{equation}
    \label{eq:w-wN}
        \norm{w-w^N}_{H^{s-1}}= \mathcal O(N^{-k})\qquad (N\rightarrow\infty).\vspace*{1mm}
    \end{equation}
\end{proposition}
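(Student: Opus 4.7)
The plan is to transport the approximation problem from $\B$ to the latent-variable space supplied by the image representation of \Cref{lem:flatimage}, construct a polynomial latent approximation whose Taylor expansion at $-1$ matches that of $\ell$ to order $\n(\B)+s$, and then push back through $M(\tderb)$ to produce a polynomial behavioral trajectory.

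Applying \Cref{lem:flatimage} to the controllable $\B$ together with $w\in H^{\n(\B)+s+k}(\I,\R^{m+p})$, I obtain a polynomial matrix $M\in\R^{q\times m}[s]$ with $\deg(M)\le \n(\B)+1$ and a latent trajectory $\ell\in H^{\n(\B)+s+k+1}(\I,\R^m)$ satisfying $w=M(\tderb)\ell$. Because $M(\tderb)$ does not raise polynomial degree, any $\ell^N\in\im P_N$ produces $w^N:=M(\tderb)\ell^N\in\B\cap\im P_N$. Writing $M(s)=\sum_{i=0}^{\deg(M)}M_is^i$, the chain rule yields
\begin{equation*}
(w-w^N)^{(j)}(-1)=\sum_{i=0}^{\n(\B)+1} M_i\,(\ell-\ell^N)^{(i+j)}(-1),\quad j=0,\dots,s-1,
\end{equation*}
so the endpoint condition $\Lambda_s(w-w^N)(-1)=0$ reduces to $(\ell-\ell^N)^{(j)}(-1)=0$ for $j=0,\dots,\n(\B)+s$, and the continuity of $M(\tderb)\colon H^{s+\n(\B)}\to H^{s-1}$ reduces \eqref{eq:w-wN} to the single estimate $\norm{\ell-\ell^N}_{H^{s+\n(\B)}}=\mathcal O(N^{-k})$.

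For the construction of $\ell^N$ I would first select a polynomial $\tilde\ell$ of degree at most $N-1$ satisfying $\norm{\ell-\tilde\ell}_{H^{s+\n(\B)}}=\mathcal O(N^{-(k+1)})$; this rate is consistent with the $\n(\B)+s+k+1$ Sobolev indices available for $\ell$ and comes from a Jackson-type bound for polynomial approximation in $H^{s+\n(\B)}(\I,\R^m)$. The Sobolev embedding $H^{\n(\B)+s+k+1}(\I,\R^m)\hookrightarrow C^{\n(\B)+s}(\overline{\I},\R^m)$ renders the values $(\ell-\tilde\ell)^{(j)}(-1)$ well defined for $j=0,\dots,\n(\B)+s$, so I add to $\tilde\ell$ the Hermite interpolant $q\in\R^m[t]$ of degree at most $\n(\B)+s$ matching those values at $-1$. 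Because $N\geq\n(\B)+s+1$, one has $q\in\im P_N$, whence $\ell^N:=\tilde\ell+q\in\im P_N$ and the endpoint vanishing is automatic. The $H^{s+\n(\B)}$-norm of $q$ is controlled, since $q$ has fixed degree independent of $N$, by the maximum of $\abs{(\ell-\tilde\ell)^{(j)}(-1)}$, which by Sobolev embedding is in turn dominated by $\norm{\ell-\tilde\ell}_{H^{\n(\B)+s+1}}=\mathcal O(N^{-k})$ from the same Jackson estimate.

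The principal obstacle is the Sobolev version of the truncation bound: \Cref{lem:convRate} only handles the $L^2$ setting, whereas the proof requires an estimate of the form $\norm{f-\Pi_N f}_{H^\beta}\leq C\,N^{\beta-\alpha}\norm{f}_{H^\alpha}$, $0\leq\beta\leq\alpha$, for a suitable projection $\Pi_N$ onto $\im P_N$ --- not the $L^2$-orthogonal Legendre projection $P_N$ itself, whose Sobolev estimates are weaker by an additional factor $N^{\beta}$, but rather a best-approximation or $H^\beta$-orthogonal projection derived from the same Sturm--Liouville spectral analysis underlying \Cref{lem:convRate}. Establishing this bound, and verifying that the extra $\n(\B)+s$ Sobolev indices baked into the assumption on $w$ precisely compensate for the $s+\n(\B)$ derivatives in the target norm, is the main technical step; the construction above then closes the argument.
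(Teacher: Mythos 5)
Your reduction to the latent variable is exactly the paper's first move: invoke \Cref{lem:flatimage} to write $w=M\tderb\ell$ with $\ell\in H^{\n(\B)+s+k+1}(\I,\R^m)$ and $\deg(M)\leq\n(\B)+1$, observe that $M\tderb$ maps $\im P_N$ into itself, and reduce both the endpoint condition and the $H^{s-1}$ estimate to matching $(\ell-\ell^N)^{(j)}(-1)=0$ for $j=0,\dots,\n(\B)+s$ together with an $\mathcal O(N^{-k})$ bound on $\norm{\ell-\ell^N}_{H^{\n(\B)+s}}$. That part is sound. The gap is the one you yourself flag: the entire argument now rests on a simultaneous polynomial approximation estimate in Sobolev norm, $\norm{f-\Pi_Nf}_{H^\beta}\leq CN^{\beta-\alpha}\norm{f}_{H^\alpha}$, which is \emph{not} available from \Cref{lem:convRate} (an $L^2$ statement only), is genuinely false for the $L^2$-orthogonal projection $P_N$ without loss of powers of $N$, and is never constructed in your proposal. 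A proof that defers its ``main technical step'' to an unproven Jackson-type bound for an unspecified projection is not complete; moreover the Hermite correction term $q$ and the accompanying Sobolev-embedding bookkeeping become unnecessary once the right construction is in place.

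The paper closes exactly this gap with an elementary antidifferentiation device that you should compare with your plan. Set $\gamma:=\n(\B)+s+1$ and project only the \emph{top} derivative in $L^2$: let $v_\gamma^N:=P_{N-\gamma}\ell^{(\gamma)}$, so that \Cref{lem:convRate} applied to $\ell^{(\gamma)}\in H^k$ gives $\norm{v_\gamma^N-\ell^{(\gamma)}}=\mathcal O(N^{-k})$. Then define $v_i^N(t):=\ell^{(i)}(-1)+\int_{-1}^tv_{i+1}^N(\tau)\,\diff\tau$ for $i=\gamma-1,\dots,0$. This single construction delivers everything you were trying to assemble from two pieces: the endpoint derivatives of $v_0^N$ match those of $\ell$ up to order $\gamma-1=\n(\B)+s$ by definition (no Hermite interpolant needed), the degree of $v_0^N$ is at most $N-1$ so $v_0^N\in\im P_N$, and the Cauchy--Schwarz inequality shows each antidifferentiation at most doubles the $L^2$ error, so $\norm{\ell-v_0^N}_{H^\gamma}=\mathcal O(N^{-k})$ using nothing beyond the $L^2$ rate. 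Pushing $v_0^N$ through $M\tderb$ then finishes the proof. If you want to salvage your own route, you would have to actually establish the Sobolev best-approximation bound (it is true, e.g.\ via the Sturm--Liouville spectral calculus already set up in \Cref{subsec:trunc}, but it requires real work); the integration trick makes that entirely avoidable.
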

\noindent The integers $k\in\N$ and $s\in\N\setminus\{0\}$ in \Cref{lem:poly_traj1} determine the convergence order and the highest derivative up to which the asymptotic behavior is valid, cf.\ \eqref{eq:w-wN}. These can be considered as user specifiable, provided  $w$ is sufficiently smooth.
\begin{proof}[Proof of \Cref{lem:poly_traj1}]
    Controllability of $\B$ implies the existence of an image representation~\eqref{eq:image}. Here we consider a particular image representation given by a flat output, see \Cref{lem:flatimage}. Let $w\in \B\cap H^{\n(\B)+s+k}(\I, \R^{m+p})$. Then there is $\ell\in H^{\n(\B)+s+k+1}(\I,\R^{m})$ such that
    \begin{equation}
        \label{eq:imag_proof}
        w = M\left(\frac{\mathrm d}{\mathrm dt}\right) \ell
    \end{equation}
    holds. We construct a polynomial which approximates $\ell$ and its derivatives up to order $\gamma:=\n(\B)+s+1$, while matching the initial values. Let $v_{\gamma}^N := P_{N-\gamma} \ell^{(\gamma)}$. Then by \Cref{lem:convRate} as $N\rightarrow \infty$ one has
    \begin{equation}
        \norm{v_{\gamma}^N - \ell^{(\gamma)}} = \norm{(I-P_{N-\gamma})\ell^{(\gamma)}} = \mathcal O(N^{-k}).
    \end{equation}
    Define
    \begin{equation}
        v_i^N(t) := \ell^{(i)}(-1) + \int_{-1}^t v_{i+1}^N(\tau)\,\mathrm d\tau,\quad i=0,\dots,\gamma-1.
    \end{equation}
    By construction $(v_0^N)^{(i)} = v_i^N$, $(v_0^N)^{(i)}(-1) = v_i^N(-1) = \ell^{(i)}(-1)$, i.e.\ 
    \begin{equation}
        \label{eq:helpfull}
        \Lambda_{\gamma}(v_0^N-\ell)(-1) = 0.
    \end{equation}
    Moreover, one sees with the Cauchy-Schwarz inequality
    \begin{equation*}
    \begin{split}
        \norm{\ell^{(i)}-(v_0^N)^{(i)}}^2 &= \int_{\I} \abs*{\int_{-1}^\tau \ell^{(i+1)}(t)-(v_0^N)^{(i+1)}(t)\,\mathrm dt}^2\,\mathrm d\tau\\
        &\leq 4 \norm{\ell^{(i+1)}-(v_0^N)^{(i+1)}}^2
    \end{split}
    \end{equation*}
    and, thus,
    \begin{equation}
        \label{eq:helpfull_season_2}
        \norm{\ell-v_0^N}_{H^\gamma}  = \mathcal O(N^{-k}).
    \end{equation}
    By construction $v_0^N\in\im P_N$.
    Recall that the polynomial matrix $M$ in \eqref{eq:imag_proof} satisfies $\deg(M)\leq \n(\B)+1$. Let $w^N=M(\frac{\mathrm d}{\mathrm d t}) v_0^N$, which is an element of $\B\cap \im P_N$. Now, \eqref{eq:helpfull} and \eqref{eq:helpfull_season_2} yield
    \begin{equation*}
        \Lambda_{s}(w^N-w)(-1) = \Lambda_{s}\bigl(M\tderb(v_0^N-\ell)\bigr)(-1)=0
    \end{equation*} and
    \begin{equation*}
        \norm{w^N-w}_{H^{s-1}} = \norm{M(\tfrac{\mathrm d}{\mathrm dt})(v_0^N-\ell)}_{H^{s-1}} = \mathcal O(N^{-k}).\qedhere
    \end{equation*}
\end{proof}

\section{The LQR problem and its approximation}  \label{sec:LQR_approx}

Our aim is to solve the quadratic optimal control problem
\begin{subequations}
    \label{eq:LQR}
\begin{align}
    \label{eq:LQR1}
        \operatorname*{minimize}_{w\in\B\cap H^{\lag(\B)}(\I,\R^{q})}\ J(w)\quad\text{s.t.\ }\\
        \label{eq:LQR2}
        \Lambda_{\lag(\B)}(w)(-1) = \xi^0,
\end{align}
\end{subequations}
with the cost function 
\begin{equation}
    \label{eq:cost}
    J(w):=\norm{y}^2 + \norm{u^{(\lag(\B))}}^2,\quad w=\col(u,y)\in\B.
\end{equation}
The initial condition~\eqref{eq:LQR2} uniquely determines the latent state, provided the latter is observable from the inputs and the outputs. 
Including the higher-order derivative term of the input into the objective function~\eqref{eq:cost} ensures feasibility of the LQR problem.
 
\begin{lemma}
    \label{lem:LQRsmoothness}
    Problem~\eqref{eq:LQR} has a unique solution $w^\star$, and $w^\star\in\mathcal C^\infty(\overline{\I},\R^q)$. Moreover, every feasible trajectory $w$ satisfies
    \begin{equation}
        \label{eq:strongconv}
        J(w-w^\star) \leq 2\bigl(J(w)-J(w^\star)\bigr).
    \end{equation}
\end{lemma}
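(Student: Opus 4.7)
My plan is to recast problem~\eqref{eq:LQR} as a standard finite-horizon LQR on a first-order state-space model, so that classical LQR theory delivers existence, uniqueness, and smoothness of $w^\star$, while the bound~\eqref{eq:strongconv} follows from a direct parallelogram argument exploiting the quadratic structure of $J$.

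First I would fix a minimal input-state-output representation~\eqref{eq:sys} of $\B$; minimality makes $(A,C)$ observable with observability index $\lag(\B)$, so the extended observability matrix together with $\Lambda_{\lag(\B)}(w)(-1) = \xi^0$ uniquely determines both the initial state $x(-1)$ and the initial input derivatives $u^{(i)}(-1)$ for $i < \lag(\B)$. Augmenting the state by $z_i := u^{(i)}$, $i = 0,\ldots,\lag(\B)-1$, and setting $\tilde u := u^{(\lag(\B))}$ yields an equivalent LQR on $[-1,1]$: the augmented state $\eta = \col(x,z_0,\ldots,z_{\lag(\B)-1})$ obeys a first-order linear ODE driven by $\tilde u$, the output is $y = \tilde C\eta$ for a suitable $\tilde C$, the initial condition $\eta(-1)$ is fully specified, and the cost becomes $\norm{\tilde C\eta}^2 + \norm{\tilde u}^2$ with identity input weight. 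A finite-horizon LQR with positive-definite input weight has a unique optimizer characterized by a matrix Riccati ODE whose solution is $\mathcal C^\infty$ on $[-1,1]$; hence there is a unique $\tilde u^\star \in \mathcal C^\infty(\overline{\I}, \R^m)$, the closed-loop trajectory $\eta^\star$ is smooth, and integration recovers $u^\star \in \mathcal C^\infty(\overline{\I}, \R^m)$ and $y^\star = Cx^\star + Du^\star \in \mathcal C^\infty(\overline{\I}, \R^p)$, giving $w^\star \in \mathcal C^\infty(\overline{\I}, \R^q)$.

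For the inequality~\eqref{eq:strongconv} I would rely only on the quadratic structure of $J$. Define the symmetric bilinear form
\[ B(w_1, w_2) := \scp{y_1, y_2} + \scp{u_1^{(\lag(\B))}, u_2^{(\lag(\B))}}, \]
so that $J(w) = B(w,w)$ and the parallelogram identity $J(w + w^\star) + J(w - w^\star) = 2J(w) + 2J(w^\star)$ holds. Linearity of $\B$ and affineness of the initial-condition constraint make the midpoint $\tfrac12(w + w^\star)$ feasible, and optimality of $w^\star$ together with the homogeneity $J(\tfrac12 v) = \tfrac14 J(v)$ yield $J(w + w^\star) \geq 4 J(w^\star)$; substituting into the parallelogram identity gives~\eqref{eq:strongconv}. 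Uniqueness then falls out: if $w_1^\star$ and $w_2^\star$ were both optimal, \eqref{eq:strongconv} would force $J(w_1^\star - w_2^\star) = 0$, i.e., $y_1^\star = y_2^\star$ and $(u_1^\star - u_2^\star)^{(\lag(\B))} \equiv 0$; combined with the homogeneous initial data $\Lambda_{\lag(\B)}(w_1^\star - w_2^\star)(-1) = 0$, successive integration from $-1$ forces $u_1^\star = u_2^\star$ and hence $w_1^\star = w_2^\star$.

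The main obstacle I expect is verifying that the map $w \mapsto (\eta, \tilde u)$ is a genuine bijection between the original feasible set $\{w \in \B \cap H^{\lag(\B)}(\I,\R^q) : \Lambda_{\lag(\B)}(w)(-1) = \xi^0\}$ and the augmented feasible set $(\eta, \tilde u)$ with its induced initial condition, and that the two cost functionals agree under this bijection. \Cref{lem:sys} and observability of $(A,C)$ supply the ingredients, but one must carefully match function spaces and translate $\xi^0$ into the correct initial state of the augmented system; once this bridge is in place, the remainder is classical.
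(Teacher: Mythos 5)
Your proposal is correct and follows essentially the same route as the paper: both recast~\eqref{eq:LQR} as a standard first-order LQR in an augmented state (the paper uses $\xi=\Lambda_{\lag(\B)}(w)$ with $x$ recovered via the pseudoinverse of the observability matrix, you use $\col(x,u,\dots,u^{(\lag(\B)-1)})$, which is linearly equivalent by observability), invoke Riccati-based smoothness of the optimal feedback, and derive~\eqref{eq:strongconv} from the same parallelogram identity combined with feasibility of the midpoint. Your explicit derivation of uniqueness from~\eqref{eq:strongconv} plus integration of the initial data is a minor addition the paper leaves to standard LQR theory, but the substance is identical.
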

\begin{proof}
    We fix a minimal input-state-output representation~\eqref{eq:sys}, that is $(A,C)$ is observable. Consider any $w=\col(u,y)\in\B\cap H^{\lag(\B)}$. Then there is $x\in H^1(\I,\R^n)$ satisfying~\eqref{eq:sys}. With
    \begin{equation}
        \label{eq:T}
        \mathcal T_k := \begin{cases}
                D&\text{if }k=0,\\
                \begin{bmatrix}
                    D\\
                    \mathcal O_{k-1}B & \mathcal T_{k-1}
                \end{bmatrix}&\text{if }k\geq 1,
            \end{cases}
    \end{equation}
    with $\mathcal O_k$ being the Kalman observability matrix defined in \eqref{eq:observstate}
    one has
    \begin{equation}
        \label{eq:lamby}
        \Lambda_{k+1}(y) = \mathcal O_{k} x + \mathcal T_{k} \Lambda_{k+1} (u)
    \end{equation}
    and by employing observability
    \begin{equation}
        \label{eq:observstate}
        x = \mathcal O^{\dagger}_{\lag(\B)-1} \bigl(\Lambda_{\lag(\B)}(y) - \mathcal T_{\lag(\B)-1}\Lambda_{\lag(\B)}(u)\bigr).
    \end{equation}
    Inserting \eqref{eq:observstate} into \eqref{eq:lamby} for $k=\lag(\B)$ by rearranging terms one obtains a linear auxiliary system \begin{equation}
        \label{eq:aux}
        \frac{\diff}{\diff t} \xi = \tilde A \xi + \tilde B \nu
    \end{equation} with state $\xi = 
        \Lambda_{\lag(\B)}(w)$, input $\nu=u^{(\lag(\B))}$. Note that~\eqref{eq:LQR} is equivalent to the LQR problem
    \begin{equation*}
        \operatorname*{minimize}_{\xi,\nu}\,\int_\I \xi(t)^\top \mathcal Q \xi(t) + \nu(t)^\top \nu(t)\,\mathrm dt,
    \end{equation*}
    where $\mathcal Q = \operatorname{diag}(0, I_p, 0,\dots, 0)$, subject to the dynamics~\eqref{eq:aux} and the initial condition $\xi(-1) = \xi^0$. By standard LQR theory the latter problem has a solution $(\xi^\star,\nu^\star)$, which is infinitely differentiable as $\nu^\star$ is a state feedback involving a solution of a Riccati differential equation. In particular, $w^\star :=\operatorname{diag}(I_m,I_p,0,\dots, 0)\xi^\star\in\mathcal C^\infty(\overline{\I},\R^{q})$ solves \eqref{eq:LQR}.

    Let $w$ be any trajectory. It is not difficult to see that $J$ satisfies the parallelogram identity
    \begin{equation*}
        J\bigl(\tfrac{1}{2}(w^\star + w)\bigr) + J\bigl(\tfrac{1}{2}(w^\star - w)\bigr)= \tfrac{1}{2} J(w^\star) + \tfrac{1}{2} J(w).
    \end{equation*}
    Convexity of the feasibility region together with $J(w^\star)\leq  J(\tfrac{1}{2}(w^\star + w))$ yield
    \begin{equation*}
        J\bigl(\tfrac{1}{2}(w^\star - w)\bigr) \leq \tfrac{1}{2} \bigl(J(w)-J(w^\star)\bigr).
    \end{equation*}
    This shows \eqref{eq:strongconv}.
\end{proof}

Instead of solving the OCP~\eqref{eq:LQR} directly, given $N \in \N$, we solve the problem restricted to polynomial trajectories, i.e.,
\begin{subequations}
    \label{eq:LQRapprox}
\begin{align}
    \label{eq:LQRapprox1}
        \operatorname*{minimize}_{w\in\B\cap \im P_N}\ J(w)\quad\text{s.t.\ }\\
        \label{eq:LQRapprox2}
        \Lambda_{\lag(\B)}(w)(-1) = \xi^0.
\end{align}
\end{subequations}
Observe that the restriction $w\in\B\cap \im P_N$ enforces polynomial trajectories of degree at most $N-1$. In the following we show that solving \eqref{eq:LQRapprox} leads to an approximately optimal control and, as $N\rightarrow\infty$, the optimality gap decays at an polynomial rate of arbitrary order.

\begin{theorem}[Convergence of optima]\label{thm:conv_optm}
    For given approximation order~$N$, $N \in \N$, let $w^\star$ and $w^N$ be the solutions to the OCPs~\eqref{eq:LQR} and~\eqref{eq:LQRapprox}, respectively. For every $k\in\N$ one has
    \begin{equation}
        \label{eq:asymp}
        0\leq J(w^N) - J(w^\star)  = \mathcal O(N^{-k})
    \end{equation}
    and
    \begin{equation}
        \label{eq:asymp3}
        \norm{w^\star-w^N} = \mathcal O(N^{-k})
    \end{equation}
    as $N\rightarrow\infty$.
\end{theorem}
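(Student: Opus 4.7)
The plan is to sandwich $J(w^N)$ between $J(w^\star)$ and $J(\tilde w^N)$ for an auxiliary polynomial approximation $\tilde w^N$ of $w^\star$ delivered by \Cref{lem:poly_traj1}, and then transport the cost gap to a trajectory norm estimate via the strong-convexity bound~\eqref{eq:strongconv}.

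First, since $w^\star \in \mathcal C^\infty(\overline{\I},\R^q)$ by \Cref{lem:LQRsmoothness}, I apply \Cref{lem:poly_traj1} with $s = \lag(\B)+1$ and the given $k\in\N$. For all sufficiently large $N$ this yields $\tilde w^N \in \B \cap \im P_N$ with $\Lambda_{\lag(\B)+1}(\tilde w^N - w^\star)(-1) = 0$ and $\norm{\tilde w^N - w^\star}_{H^{\lag(\B)}} = \mathcal O(N^{-k})$. The vanishing jet condition in particular implies $\Lambda_{\lag(\B)}(\tilde w^N)(-1) = \xi^0$, so $\tilde w^N$ is admissible for~\eqref{eq:LQRapprox}.

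Next, I expand the quadratic cost~\eqref{eq:cost} around $w^\star$ and bound the linear cross-terms by Cauchy--Schwarz in terms of $\norm{y^\star}$ and $\norm{(u^\star)^{(\lag(\B))}}$ times $\norm{\tilde w^N - w^\star}_{H^{\lag(\B)}}$; the remaining quadratic remainder is of lower order, and together they yield $J(\tilde w^N) - J(w^\star) = \mathcal O(N^{-k})$. Optimality of $w^N$ on the polynomial admissible set, together with optimality of $w^\star$ on the full admissible set (note that $w^N$ is a polynomial, hence an element of $\B\cap H^{\lag(\B)}$ with the correct initial data) gives
\[
    0 \leq J(w^N) - J(w^\star) \leq J(\tilde w^N) - J(w^\star) = \mathcal O(N^{-k}),
\]
establishing~\eqref{eq:asymp} for arbitrary $k$.

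For~\eqref{eq:asymp3}, feasibility of $w^N$ for~\eqref{eq:LQR} allows me to invoke~\eqref{eq:strongconv}, yielding $J(w^N - w^\star) \leq 2(J(w^N) - J(w^\star)) = \mathcal O(N^{-k})$. Writing $\delta = w^N - w^\star = \col(\delta_u, \delta_y)$, this directly controls both $\norm{\delta_y}$ and $\norm{\delta_u^{(\lag(\B))}}$. Since $w^N$ and $w^\star$ share the initial data $\xi^0$, one has $\delta_u^{(j)}(-1) = 0$ for $j = 0,\dots,\lag(\B)-1$, so iterated integration from $-1$ (exactly as in the argument leading to~\eqref{eq:helpfull_season_2}) bounds $\norm{\delta_u}$ by a constant multiple of $\norm{\delta_u^{(\lag(\B))}}$, whence $\norm{\delta}^2 = \mathcal O(N^{-k})$; applying this with $2k$ in place of $k$ delivers~\eqref{eq:asymp3}. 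The main subtlety is precisely this last step: since $u$ is not penalized directly in~\eqref{eq:cost}, one must exploit the matched initial jets to recover an $L^2$ bound on $\delta_u$ from the $L^2$ bound on its $\lag(\B)$-th derivative, while simultaneously choosing the approximation smoothness $s = \lag(\B)+1$ so that $\tilde w^N$ is both feasible and close to $w^\star$ in the strong enough norm $H^{\lag(\B)}$ to transfer the Legendre convergence rate into a cost-functional rate.
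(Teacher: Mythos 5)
Your proposal is correct and follows essentially the same route as the paper's proof: approximate $w^\star$ by a feasible polynomial trajectory from \Cref{lem:poly_traj1} with matched initial jet, sandwich $J(w^N)$ between $J(w^\star)$ and the cost of that approximant, and recover the $L^2$ bound on $w^\star-w^N$ from \eqref{eq:strongconv} plus iterated integration of $\delta_u$ from the matched initial conditions. The only cosmetic difference is that you estimate $J(\tilde w^N)-J(w^\star)$ by expanding the quadratic form and bounding cross-terms, whereas the paper uses the reverse triangle inequality on $J^{1/2}$ — these are equivalent.
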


\begin{proof}
    Recall that the solution $w^\star\in\mathcal{C}^\infty(\overline{\I},\R^{m+p})$, see \Cref{lem:LQRsmoothness}. Thus, by \Cref{lem:poly_traj1} for arbitrary $k\in\N$ there is $v^N\in\B\cap \im P_N$ with $\Lambda_{\mathfrak{l}(\B)}(w^\star-v^N)(-1)=0$ and
    \begin{equation}
    \label{eq:asymp2}
        \norm{w^\star-v^N}_{H^{\lag(B)}}=\mathcal O(N^{-k}).
    \end{equation}
    As $w^\star$ and $w^N$ are solutions of \eqref{eq:LQR} and \eqref{eq:LQRapprox}, respectively, one has
    \begin{equation*}
        J(w^\star) \leq J(w^N) \leq J(v^N), \quad N\in\N,
    \end{equation*}
    which shows the left-hand-side inequality in~\eqref{eq:asymp}. Further, by the reverse triangle inequality
    \begin{equation*}
        \abs*{J(v^N)^\frac12-J(w^\star)^\frac12}\leq J(v^N-w^\star)^{\frac12} \leq \norm{v^N-w^\star}_{H^{\lag(\B)}},
    \end{equation*} which together with \eqref{eq:asymp2} implies
    \begin{align*}
        J(v^N) - J(w^\star) ={ }& 2J(w^\star)^{\frac12}\bigl(J(v^N)^{\frac12}-J(w^\star)^{\frac12}\bigr)\\
        &{ } + \bigl(J(v^N)^{\frac12}-J(w^\star)^{\frac12}\bigr)^2\\
        ={ }& \mathcal O(N^{-k}).%\qedhere
    \end{align*}
    Next we show \eqref{eq:asymp3}. Let $\col(u^\star,y^\star)=w^\star$ and $\col(u^N,y^N)=w^N$. By $\Lambda_{\lag(\B)}(w^\star-w^N)(-1)=0$. We find
    \begin{align*}
        &\norm{(u^\star)^{(j)}-(u^N)^{(j)}}^2 \\&= \int_\I \abs*{\int_{-1}^\tau (u^\star)^{(j+1)}(t)-(u^N)^{(+1)}(t)\,\diff t}^2\,\diff \tau\\
        &\leq 4 \norm{(u^\star)^{(j+1)}-(u^N)^{(j+1)}}^2
    \end{align*}
    for all $j=0,\dots,\lag(\B)-1$. Thus,
    \begin{equation*}
        \norm{w^\star-w^N}^2 \leq 4^{\lag(\B)} J(w^\star-w^N). 
    \end{equation*}
    This together with \eqref{eq:strongconv} in \Cref{lem:LQRsmoothness} and \eqref{eq:asymp} yields \eqref{eq:asymp2}. 
\end{proof}

\begin{remark}
    \label{rem:cost}
    Similar to the above approach, one can handle a cost function given by any \emph{quadratic differential form}, see \cite{QDF},
    \begin{equation}
        J(w) = \sum_{i,j=0}^{\lag(\B)} (w^{(i)})^\top \Phi_{i,j} w^{(j)},
    \end{equation} with matrices $\Phi_{i,j}\in\R^{q\times q}$ such that $\Phi_{i,i}=\Phi_{i,i}^\top$ and
    \begin{equation}
        \Phi_{\lag(\B),\lag(\B)} = \begin{bmatrix}
            \widetilde \Phi & 0\\
            0 & 0
        \end{bmatrix},
    \end{equation}
    where $\widetilde\Phi\in\R^{m\times m}$ corresponding to $u^{(\lag(\B))}$ is invertible.
\end{remark}

\begin{remark}
    \label{rem:cost_nofeed}
    In the case where the state is directly observable at the output, i.e.\ $C=I_n$ and $D=0$ in the input-state-output representation~\eqref{eq:sys}, \Cref{lem:LQRsmoothness} and \Cref{thm:conv_optm} likewise apply to the LQR problem
    \begin{subequations}
        \label{eq:LQR_nofeed}
    \begin{align}
        &\operatorname*{minimize}_{\col(u,x)\in\B}\ \norm{x}^2 + \norm{u}^2\quad\text{s.t.}\\
        &x(-1) = x^0
    \end{align}
    \end{subequations}
    and its restrictions to polynomial trajectories.
\end{remark}

\section{A ``fundamental lemma"}  \label{sec:f-lemma}

The main result of this section is a parametrization of the trajectories of a controllable linear differential system in terms of a constant matrix obtained from ``sufficiently-informative" data. 
To this end, we first define some new concepts and notation and state some preliminary results. 

\subsection{Persistency of excitation}\label{sec:PE}

Given $L \in \N \setminus \{0\}$ and $f\in H^{L-1}(\I,\R^d)$, we define the Gramian 
\begin{equation}\label{eq:Gammaf}
      \Gamma_{L}(f) := \int_{\I} \Lambda_{L} (f)\Lambda_{L} (f)^\top\,\mathrm d t.
\end{equation}
\begin{definition}
    Let $L\in\N\setminus\{0\}$. A function $f:\I\rightarrow\R^d$ is called \emph{persistently exciting of order $L$}, if $f\in H^{L-1}(\I,\R^d)$ and the Gramian $\Gamma_{L}(f)$ in \eqref{eq:Gammaf} is positive definite.
\end{definition}
This definition is reminiscent of the notion of \emph{excitation} in \cite[Definition~2]{mareels1984sufficiency}. In the following result we relate it to the concept of persistency of excitation used in \cite{rapisarda2022persistency}, specifically property (3) in the lemma below.
\begin{lemma}\label{lem:eqDef}
    For $f \in H^{L-1}(\I,\R^d)$ with $L \in \N \setminus\{0\}$, the following statements are equivalent:
    \begin{itemize}
        \item[(1)] $f$ is persistently exciting of order $L$;
        \item[(2)] $\ker(\Gamma_k(L))=\{0\}$;
        \item[(3)] If $\eta\in \R^{Ld}$ is such that $\eta^\top \Lambda_L(f)=0$ a.e., then $\eta=0$;
        \item[(4)] The functions $f,f^{(1)},\dots, f^{(L-1)}$ are linearly independent in $L^2(\I,\R^d)$. 
    \end{itemize}
    \end{lemma}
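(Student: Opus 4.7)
The plan is to reduce all four conditions to a single underlying identity, namely
\begin{equation*}
    \eta^\top \Gamma_L(f)\, \eta \;=\; \int_\I \bigl(\eta^\top \Lambda_L(f)(t)\bigr)^2 \,\diff t \;=\; \norm{\eta^\top \Lambda_L(f)}^2, \qquad \eta\in\R^{Ld},
\end{equation*}
which follows directly from the definition~\eqref{eq:Gammaf} of $\Gamma_L(f)$. In particular, this shows that $\Gamma_L(f)$ is symmetric and positive semidefinite. With this identity in hand, I would establish the chain $(1)\Leftrightarrow(2)\Leftrightarrow(3)\Leftrightarrow(4)$ directly rather than cycling through implications.

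For $(1)\Leftrightarrow(2)$, since $\Gamma_L(f)$ is symmetric positive semidefinite, positive definiteness is equivalent to the implication $\eta^\top\Gamma_L(f)\eta=0\Rightarrow \eta=0$; and by a standard argument (factor $\Gamma_L(f)=B^\top B$, so $\eta^\top\Gamma_L(f)\eta=\norm{B\eta}_2^2=0$ iff $B\eta=0$ iff $\Gamma_L(f)\eta=0$), this is the same as having trivial kernel. For $(2)\Leftrightarrow(3)$, the displayed identity above shows that $\Gamma_L(f)\eta=0$ iff $\eta^\top\Gamma_L(f)\eta=0$ iff $\norm{\eta^\top\Lambda_L(f)}=0$, which holds iff $\eta^\top\Lambda_L(f)=0$ almost everywhere.

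For $(3)\Leftrightarrow(4)$, I would partition $\eta=\col(\eta_0,\dots,\eta_{L-1})$ with $\eta_i\in\R^d$ and observe
\begin{equation*}
    \eta^\top \Lambda_L(f) \;=\; \sum_{i=0}^{L-1} \eta_i^\top f^{(i)}.
\end{equation*}
Condition (3) thus states that any such linear combination of $f,f^{(1)},\dots,f^{(L-1)}$ vanishing almost everywhere forces every coefficient vector $\eta_i$ to vanish, which is precisely the linear independence statement (4) for these vector-valued functions in $L^2(\I,\R^d)$, read (as is natural for vector-valued signals) as injectivity of the coefficient map $\R^{Ld}\to L^2(\I,\R)$, $(\eta_0,\dots,\eta_{L-1})\mapsto\sum_i \eta_i^\top f^{(i)}$.

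There is no real technical obstacle: the entire argument rests on the single Gramian identity and a clean unpacking of the block structure of $\eta$ and $\Lambda_L(f)$. The only point deserving care is the interpretation of ``linearly independent'' in (4) when $d>1$, which must be taken with $\R^d$-valued coefficients (equivalently, linear independence in $L^2(\I,\R)$ of the $Ld$ scalar component functions $\{(f^{(i)})_k:0\le i\le L-1,\,1\le k\le d\}$); with this reading (3) and (4) are essentially the same statement transcribed in different notation.
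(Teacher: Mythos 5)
Your proof is correct and follows essentially the same route as the paper: the paper's argument also rests on the single identity $\eta^\top\Gamma_L(f)\eta=\norm{\Lambda_L(f)^\top\eta}^2$, proves the equivalence of (2) and (3) from it, and declares the equivalences of (1) with (2) and of (3) with (4) straightforward. You merely spell out those omitted steps, and your remark that (4) must be read with $\R^d$-valued coefficients when $d>1$ is the correct (and necessary) interpretation.
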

\begin{proof}
    We show only the equivalence of (2) and (3), as the equivalence of (1) and (2) as well as (3) and (4) are straightforward. Observe that $\eta\in\ker(\Gamma_L(f))$ if and only if
    \begin{equation*}
        0 = \eta^\top \Gamma_L(f)\eta = \int_\I \norm{\Lambda_L(f)(t)^\top\eta}^2_2\,\mathrm dt = \norm{\Lambda_L(f)^\top\eta}^2,
    \end{equation*}
    which shows the equivalence of (2) and (3).
\end{proof}

\begin{example}
    \label{ex:pe_poly}
    Let $f:\I\rightarrow \R$ be a monic polynomial of degree $L-1$ with $L\in\N\setminus\{0\}$. Then $f^{(k)}$ for $k=0,\dots,L-1$ is a polynomial of degree $L-1-k$. Therefore, it is clear that $f,f^{(1)},\dots, f^{(L-1)}$ are linearly independent functions in $L^2(\I,\R)$, and, thus, $f$ is persistently exciting of order $L$ by \Cref{lem:eqDef}.
\end{example}

\begin{remark}[Discrete-time excitation analogue]
    The above concept of persistency of excitation in continuous time aligns seamlessly with its discrete-time counterpart (see \cite{willems2005note}). In discrete time, the time-shift operator serves as the analogue to differentiation, whereas summation corresponds to integration. 
    With this in mind, given a discrete-time signal 
    $f:\{0,\dots,N-1\}\rightarrow \R^d$, we define 
    \[
        \Lambda_L(f)(t) := \begin{bmatrix}
            f^\top(t) & \dots & f^\top(t+L-1)
        \end{bmatrix}{}^\top
    \]
    and consider the Hankel matrix
    \begin{equation*}
        H_L(f) := \begin{bmatrix}
            \Lambda_L(f)(0)  &\dots & \Lambda_L(f)(N-L)
        \end{bmatrix}.
    \end{equation*}
Now, the corresponding Gramian
\begin{align*}
    \Gamma_L(f)&:= \sum_{j=0}^{N-L} \Lambda_L(u)(j) \Lambda_L(u)(j)^\top=H_L(u)H_L(u)^\top
\end{align*} is positive definite if and only if $H_L(u)$ has full row rank, i.e.\ $f$ is persistently exciting of order $L$, cf.~\cite{willems2005note}.
\end{remark}

\subsection{Fundamental lemma}

In the following, we also need Gramians constructed from input-state trajectories of a system \eqref{eq:sys}. 
Given $u\in H^{L-1}(\I,\R^m)$ and $x\in H^{K-1}(\I,\R^n)$ for some $L,K\in\N\setminus\{0\}$, we extend the notation of stacked derivatives~\eqref{eq:Lambda} to 
\begin{equation}
    \label{eq:Lambdaxu}
    \Lambda_{L,K}(u,x):=\begin{bmatrix}
        \Lambda_{L} (u) \\ \Lambda_{K} (x)
    \end{bmatrix}
\end{equation}
We define the Gramian $\Gamma_{L,K} (x,u)$
by 
\begin{equation}
    \label{eq:Gammaux}
    \Gamma_{L,K} (u,x): = \int_\I \Lambda_{L,K} (u,x)\Lambda_{L,K} (u,x)^\top\,\diff t\; .
\end{equation}

We now state some results instrumental to establishing
a continuous-time fundamental lemma. 
To this end, we consider an input-state-output representation~\eqref{eq:sys}. Then, for fixed $L \in \N \setminus\{0\}$, we define for $i \in \N$
\begin{equation}
\mathcal C_i(A,B) := \begin{cases}
        I_{n+Lm}&\text{if } i=0\\
    \begin{bmatrix}
        A^i & A^{i-1} B &\dots & B & 0\\
        0& 0 & \dots & 0 & I_{Lm}
    \end{bmatrix} &\text{if }i\geq 1
\end{cases}\; .
\end{equation}

\begin{lemma}\label{prop:diffGamma}
    Consider an input-state-output representation~\eqref{eq:sys} of $\B$ and let $\col(u,x)$ be an input-state trajectory with $u\in H^{L+n}(\I,\R^{m})$. For $i=0,\ldots,n-1$, the following equalities hold:
\begin{equation}\label{eq:Lambdasfurther}
\begin{bmatrix}
\Lambda_1(x^{(i)})\\
\Lambda_{L}(u^{(i)})
\end{bmatrix}=\mathcal{C}_i(A,B) \begin{bmatrix}
    \Lambda_1(x)\\
    \Lambda_{L+i}(u)
    \end{bmatrix}\; .
\end{equation}
\end{lemma}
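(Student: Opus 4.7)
The plan is to recognize that identity~\eqref{eq:Lambdasfurther} decouples into two independent claims: the top block, $x^{(i)} = A^i x + \sum_{j=0}^{i-1} A^{i-1-j} B u^{(j)}$, and the bottom block, $\Lambda_L(u^{(i)}) = \begin{bmatrix}0 & I_{Lm}\end{bmatrix}\begin{bmatrix}\Lambda_1(x)\\ \Lambda_{L+i}(u)\end{bmatrix}$. The bottom block is immediate from the definitions, since $\Lambda_L(u^{(i)}) = \col(u^{(i)},\ldots,u^{(L+i-1)})$ is literally the last $Lm$ block of $\Lambda_{L+i}(u) = \col(u, u^{(1)},\ldots, u^{(L+i-1)})$, so the $[0 \ \cdots \ 0 \ I_{Lm}]$ row trivially extracts it. Thus the substance of the lemma is the top block.

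For the top block, I would argue by induction on $i$. The base case $i = 0$ is just $x = x$, and matches $\mathcal{C}_0(A,B) = I_{n+Lm}$. For the inductive step, assume the identity holds at $i-1$, i.e.,
\begin{equation*}
    x^{(i-1)} = A^{i-1} x + \sum_{j=0}^{i-2} A^{i-2-j} B\, u^{(j)}.
\end{equation*}
Differentiating once (justified since $u\in H^{L+n}(\I,\R^m)$ implies all relevant derivatives lie in $L^2$, and $x$ picks up one extra degree of regularity through $\dot x = Ax + Bu$), substituting $\dot x = Ax + Bu$, and reindexing yields
\begin{equation*}
    x^{(i)} = A^{i-1}(Ax + Bu) + \sum_{j=0}^{i-2} A^{i-2-j} B\, u^{(j+1)} = A^i x + \sum_{k=0}^{i-1} A^{i-1-k} B\, u^{(k)},
\end{equation*}
which is precisely the top row of $\mathcal{C}_i(A,B)$ applied to $\col(\Lambda_1(x), \Lambda_{L+i}(u))$.

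The only real subtlety is the regularity bookkeeping: one must verify that $u^{(L+i-1)}$ is a legitimate $L^2$-function for all $i \leq n-1$, which follows from $u\in H^{L+n}(\I,\R^m)$ (yielding weak derivatives up to order $L+n$ in $L^2$), and that $x$ possesses classical derivatives up to order $n$ in the $L^2$ sense, which follows by iteratively applying $\dot x = Ax + Bu$ to boost regularity of $x$ by one order per step. Beyond these checks, the argument is a routine induction, so I expect no significant obstacle; the statement is essentially a repackaging of the standard formula for iterated derivatives of the state in a linear system.
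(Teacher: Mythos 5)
Your proposal is correct and follows essentially the same route as the paper: the paper's proof simply declares the case $i=0$ trivial and invokes the iterated-derivative formula $x^{(i)}=A^i x+\sum_{j=0}^{i-1}A^{i-1-j}Bu^{(j)}$, which is exactly the identity you establish by induction (the bottom block being, as you note, a trivial coordinate extraction). You merely supply the induction and the regularity bookkeeping that the paper leaves implicit.
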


\begin{proof}
    The case $i=0$ is trivial. To prove \eqref{eq:Lambdasfurther} in the  case $i\geq 1$, use the equation 
\begin{equation*}
x^{(i)}=A^i x+\sum_{j=0}^{i-1} A^{i-1-j}B u^{(j)}.\qedhere
\end{equation*}
\end{proof}

The next result is analogous to \cite[Proposition~1]{rapisarda2022persistency}; since the proof needs to be adapted to the language and notation of this paper, we provide it in full detail.
\begin{proposition}\label{prop:PE1}
Suppose that $\B$ is controllable and consider a minimal input-state-output representation~\eqref{eq:sys} of $\B$ such that $(A,B)$ is controllable. Let $\col(x,u)$ be a input-state trajectory. Assume that $u$ is persistently exciting of order at least $n+L$, with $L\in\N\setminus\{0\}$. Then 
\begin{itemize}
    \item[(1)] If $\xi\in\R^{Lm+n}$ satisfies
\begin{equation}
\label{eq:ortho}
\xi^\top\Lambda_{L,1}(u,x)=0
\end{equation}
almost everywhere on $\I$, then $\xi=0$; 
\item[(2)] $\Gamma_{L,1}(u,x)$ is positive definite.
\end{itemize}
\end{proposition}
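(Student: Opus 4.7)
By Lemma~\ref{lem:eqDef} applied to the vector-valued function $\Lambda_{L,1}(u,x) \in L^2(\I,\R^{Lm+n})$, statements (1) and (2) are equivalent, so it suffices to prove (1). Write $\xi = \col(\xi_{u,0},\dots,\xi_{u,L-1},\xi_x)$ with $\xi_{u,i} \in \R^m$ and $\xi_x \in \R^n$, and assume
\begin{equation*}
    \sum_{i=0}^{L-1} \xi_{u,i}^\top u^{(i)} + \xi_x^\top x = 0 \qquad\text{a.e.\ on } \I.
\end{equation*}
The plan is to apply the characteristic polynomial $\chi_A(s) := \det(sI_n - A) = \sum_{k=0}^{n} c_k s^k$ (with $c_n = 1$) as a differential operator to this identity: Cayley--Hamilton will eliminate the $x$-contribution, after which persistency of excitation together with the controllability of $(A,B)$ will force $\xi = 0$.

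First, I would establish the auxiliary identity $\chi_A\tderb x = F\tderb u$ for some $F \in \R^{n\times m}[s]$ of degree at most $n-1$ with leading matrix coefficient $B$. This follows by substituting the iterated formula $x^{(k)} = A^k x + \sum_{j=0}^{k-1} A^{k-1-j} B u^{(j)}$ into $\chi_A\tderb x = \sum_{k=0}^{n} c_k x^{(k)}$ and using $\chi_A(A) = 0$, giving $F_j = \sum_{k=j+1}^{n} c_k A^{k-1-j}$ for $j = 0,\dots,n-1$ with $F_{n-1} = I_n$. Applying $\chi_A\tderb$ to the hypothesis then yields $g\tderb u = 0$ a.e., where
\begin{equation*}
    g(s) := \xi_x^\top F(s) + \chi_A(s)\sum_{i=0}^{L-1} \xi_{u,i}^\top s^i \in \R^{1\times m}[s],
\end{equation*}
is a polynomial of degree at most $n+L-1$. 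Since $u$ is persistently exciting of order $n+L$, the functions $u, u^{(1)}, \dots, u^{(n+L-1)}$ are linearly independent in $L^2(\I,\R^m)$ by Lemma~\ref{lem:eqDef}(4), forcing $g \equiv 0$.

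The identity $g \equiv 0$ is then unwound in two stages. The coefficients of $s^{n+L-1}, s^{n+L-2}, \dots, s^n$ in $g$ involve only $\chi_A(s) \sum_i \xi_{u,i}^\top s^i$ (because $\deg \xi_x^\top F \leq n - 1$) and, since $\chi_A$ is monic, form a triangular system with unit diagonal in $\xi_{u,L-1},\dots,\xi_{u,0}$; reading off the coefficients from top to bottom yields $\xi_u = 0$ inductively. The remaining identity $\xi_x^\top F(s) = 0$ gives $\xi_x^\top F_j B = 0$ for $j = 0,\dots,n-1$; unwinding from $j = n-1$ (where $F_{n-1} = I_n$ gives $\xi_x^\top B = 0$) down to $j = 0$ via the triangular dependence on powers of $A$ in the $F_j$ produces $\xi_x^\top [B \;\; AB \;\; \cdots \;\; A^{n-1} B] = 0$, and controllability of $(A,B)$ finally forces $\xi_x = 0$.

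The main obstacle is justifying the differentiations in the weak/$L^2$ sense: persistency of excitation of order $n+L$ gives $u \in H^{n+L-1}(\I,\R^m)$, and the state equation $\dot x = Ax + Bu$ then yields $x \in H^{n+L}(\I,\R^n)$, so all derivatives appearing in the argument are bona fide $L^2$ functions and the polynomial manipulations reduce to straightforward bookkeeping.
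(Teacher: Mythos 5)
Your proof is correct, but it takes a genuinely different route from the paper's. The paper differentiates the annihilation identity $i$ times for $i=0,\dots,n$, rewrites each derivative via $x^{(i)}=A^ix+\sum_{j}A^{i-1-j}Bu^{(j)}$ to produce $n+1$ constant row vectors $w_0,\dots,w_n$ annihilating $\col(\Lambda_1(x),\Lambda_{L+n}(u))$ a.e., argues from persistency of excitation that this signal has at most $n$ independent a.e.\ annihilators so the $w_i$ are linearly dependent, extracts $\eta=0$ (your $\xi_u=0$) from the shift structure of the $w_i$, and only then forms the combination $\sum_i\alpha_i w_i$ with the characteristic-polynomial coefficients to conclude $\zeta A^iB=0$ and invoke controllability. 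You instead apply $\chi_A\tderb$ directly to the scalar identity as the very first step, so Cayley--Hamilton eliminates $x$ immediately and everything collapses to a single polynomial annihilator $g\tderb u=0$ of degree at most $n+L-1$, which persistency of excitation of order $n+L$ kills coefficient by coefficient; the triangularity in the top $L$ coefficients gives $\xi_u=0$ and the bottom $n$ coefficients give $\xi_x^\top A^kB=0$. This buys you a shorter argument that avoids both the dimension count on the annihilator space and the somewhat delicate extraction of $\eta$ from a linear dependence among shifted vectors, while the paper's formulation stays closer to the discrete-time template and to \cite{rapisarda2022persistency} and makes the augmented data vector explicit. Your regularity bookkeeping ($u\in H^{n+L-1}$ from the PE assumption, hence $x\in H^{n+L}$ from the state equation, so all weak derivatives used are genuine $L^2$ functions) is exactly what is needed. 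One small notational slip: you define $F_j=\sum_{k=j+1}^n c_kA^{k-1-j}$ without the trailing $B$ yet call $B$ the leading coefficient of $F$ and later write $\xi_x^\top F_jB$; this is consistent only if $F(s)=\sum_j F_jB\,s^j$, which should be stated explicitly.
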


\begin{proof}
The second statement follows in a straightforward way from the first one, cf.\ proof of \Cref{lem:eqDef}. We show (1).
Let $\xi^\top = \begin{bmatrix} \eta & \zeta\end{bmatrix}$, $\eta = \begin{bmatrix} \eta_0 & \dots & \eta_{L-1}\end{bmatrix}$, with $\eta_k\in\R^{1\times m}$, $j=0,\dots,L-1$, and $\zeta\in\R^{1\times n}$. Differentiating \eqref{eq:ortho} $i$ times, $i=0,\ldots,n$, we conclude that 
\[
\begin{bmatrix}\zeta & \eta\end{bmatrix} \begin{bmatrix}
    \Lambda_1(x^{(i)})\\
    \Lambda_{L}(u^{(i)})
    \end{bmatrix}=0\; , 
\]
almost everywhere on $\I$ for $i=0,\ldots,n$.

Using equation \eqref{eq:Lambdasfurther} established in the proof of \Cref{prop:diffGamma}, we conclude that for $i=0,\ldots,n$ it  that 
\begin{equation}\label{eq:istep}
    0=\begin{bmatrix} \zeta  A^i&\ldots& \zeta  B& \eta_0  &\ldots& \eta_{L-1}   \end{bmatrix} \begin{bmatrix} \Lambda_1(x)\\  \Lambda_{L+i}(u)\end{bmatrix}\; ,
\end{equation}
holds almost everywhere on $\I$. Now define 
\begin{eqnarray*}
w_0&:=&\begin{bmatrix}  \zeta  & \eta_0  &\ldots& \eta_{L-1}  & 0_{nm}\end{bmatrix}\\
w_1&:=&\begin{bmatrix}  \zeta  A & \zeta  B& \eta_0  &\ldots& \eta_{L-1}   &0_{(n-1)m} \end{bmatrix}\\
%w_2&:=&\begin{bmatrix}   \zeta  A^2 &\ldots&  \zeta  B& \eta_0  &\ldots& \eta_{L-1}  & 0_{(n-2)m}  \end{bmatrix}\\
&\vdots&\\
w_n&:=&\begin{bmatrix}  \zeta  A^{n}  \ldots& \zeta  B& \eta_0  &\ldots& \eta_{L-1}  \end{bmatrix}\; .
\end{eqnarray*}
From  \eqref{eq:istep} we have that  the following equations hold true almost everywhere on $\I$: 
\begin{equation}\label{eq:otherHankel}
w_i\begin{bmatrix}
\Lambda_1(x)\\ \Lambda_{L+n}(u)
\end{bmatrix}=0\; , \; i=0,\ldots,n\; .
\end{equation}
Since $u$ is persistently exciting of order at least $L+n$, using statement 3 of \Cref{lem:eqDef} we conclude that  the vector-valued function $\begin{bmatrix}
\Lambda_1(x)^\top & \Lambda_{L+n}(u)^\top
\end{bmatrix}{}^\top$ has at most $n$ ``almost everywhere annihilators"  on $\I$: it follows that the $n+1$ vectors $w_i$, $i=0,\ldots,n$ are linearly dependent. 

Since the last components of the $w_i$'s are zero, $i=0,\ldots,n$,  we conclude that $ \eta_{L-1}  =0_{1\times m}$, then $\eta_{L-2}=0$, and so on until $ \eta_0  =0$. Consequently 
\begin{eqnarray*}
w_0&=&\begin{bmatrix}  \zeta  &0_{1\times (n+L)m}\end{bmatrix}\\
w_1&=&\begin{bmatrix}  \zeta  A & \zeta  B&0_{1\times (n+L-1)m}\end{bmatrix}\\
w_2&=&\begin{bmatrix}   \zeta  A^2 & \zeta  AB&  \zeta  B&0_{(n+L-2)m}  \end{bmatrix}\\
&\vdots&\\
w_n&=&\begin{bmatrix}  \zeta  A^{n} & \zeta  A^{n-1} B& \ldots& \zeta  B&0_{1\times Lm}\end{bmatrix}\; .
\end{eqnarray*}
Denote by $\alpha_i$, $i=0\ldots,n$ the coefficients of the characteristic polynomial of $A$, and using $\sum_{i=0}^n  A^i \alpha_i=0$ conclude that $\sum_{i=0}^n w_i \alpha_i$ equals 
\begin{eqnarray*}
\begin{bmatrix}\sum_{i=0}^n  \zeta A^i \alpha_i&\sum_{i=1}^n \alpha_i \zeta A^{i-1}B&\ldots &\zeta B&0_{1\times Lm}\end{bmatrix}\\
= \begin{bmatrix}0_{1\times n}&\sum_{i=1}^n \zeta   \alpha_i A^{i-1}B&\ldots &\zeta  B&0_{1\times Lm}\end{bmatrix}\; . 
\end{eqnarray*}
By construction, almost everywhere on $\I$ it holds that 
\[
\begin{bmatrix}\sum_{i=1}^n \alpha_i  \zeta   A^{i-1}B&\ldots &\alpha_n  \zeta   B\end{bmatrix}
\Lambda_{n}(u)=0\; ;
\] 
since $u$ is persistently exciting of order at least $L+n$, we conclude that 
\[
 \begin{bmatrix}\sum_{i=1}^n \alpha_i \zeta A^{i-1}B&\sum_{i=2}^n \alpha_i \zeta A^{i-2}B &\ldots&\alpha_n \zeta B\end{bmatrix}=0\; .
\]
It follows from the last $m$ equations that $\alpha_n   \zeta  B=0$; since the highest coefficient $\alpha_n$ of the characteristic polynomial of $A$ equals 1, we conclude that $ \zeta  B=0$. The previous $m$-dimensional block-entry of the vector is $\alpha_{n-1}  \zeta  B+\alpha_n \zeta  AB=0+\alpha_n\zeta AB=0$. We conclude that  $ \zeta  AB=0$. The same argument can be used to prove $ \zeta  A^i B=0$, $i=0,\ldots,n-1$. Since the pair $(A,B)$ is controllable we conclude that $\zeta=0$ and consequently that Statement (1) is true. 
\end{proof}

We now have all the necessary ingredients to formulate a continuous-time ''fundamental lemma``.
\begin{theorem}[Continuous-time ''fundamental lemma``]
    \label{thm:FL}
    Suppose that $\B$ is controllable. Let $\col(\overline u,\overline y)\in \B$ be such that $\overline u$ is persistently exciting of order $L+\n(B)$, with $L\geq \lag(\B)+1$.
    For $\col(u,y)\in H^{L-1}(\I,\R^q)$ and $K\in\N$, $\lag(\B)+1\leq K\leq L$, the following statements are equivalent:
    \begin{itemize}
        \item[(1)] $\col(u,y)\in\B$;
        \item[(2)] There exists $g\in L^2(\I, \R^{Lm+Kp})$ such that
        \begin{equation}
            \label{eq:FLrep}
                \Lambda_{L,K}(u,y) = \Gamma_{L,K}(\overline{u},\overline{y}) g.
        \end{equation}
    \end{itemize}
    Moreover, $\rank \Gamma_{L,K}(\overline{u},\overline{y}) = Lm + \n(\B)$.
\end{theorem}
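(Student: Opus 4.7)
The plan is to reduce \Cref{thm:FL} to \Cref{prop:PE1} via a linear change of variables relating $\Lambda_{L,K}(u,y)$ to $\Lambda_{L,1}(u,x)$. Fix a minimal input-state-output representation~\eqref{eq:sys} so that $(A,B)$ is controllable and $(A,C)$ is observable, and let $\overline{x}\in H^1(\I,\R^n)$ denote the state associated with $(\overline u,\overline y)$. The identity $\Lambda_K(y)=\mathcal{O}_{K-1}x+\mathcal{T}_{K-1}\Lambda_K(u)$ from \eqref{eq:lamby}, together with $K\leq L$ (which makes $\Lambda_K(u)$ the first $Km$ block entries of $\Lambda_L(u)$), produces
\begin{equation*}
\Lambda_{L,K}(u,y)=T\,\Lambda_{L,1}(u,x),\qquad T:=\begin{bmatrix} I_{Lm} & 0\\ \mathcal{T}_{K-1}\begin{bmatrix} I_{Km} & 0 \end{bmatrix} & \mathcal{O}_{K-1}\end{bmatrix},
\end{equation*}
for every $\col(u,y)\in\B$ with state $x$; integration then yields $\Gamma_{L,K}(\overline u,\overline y)=T\,\Gamma_{L,1}(\overline u,\overline x)\,T^\top$.

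Because $K\geq\lag(\B)+1$, $\mathcal{O}_{K-1}$ has full column rank $n=\n(\B)$, so $T$ is injective and $\im T$ has dimension $Lm+n$. Under the assumed persistency of excitation, \Cref{prop:PE1} yields $\Gamma_{L,1}(\overline u,\overline x)$ positive definite. Combining these facts, $\ker\Gamma_{L,K}(\overline u,\overline y)=\ker T^\top$, hence $\im\Gamma_{L,K}(\overline u,\overline y)=\im T$ and $\rank\Gamma_{L,K}(\overline u,\overline y)=Lm+n$, settling the final assertion. For (1)$\Rightarrow$(2), a state $x\in H^1$ exists for $\col(u,y)\in\B$, so $\Lambda_{L,K}(u,y)(t)\in\im T=\im\Gamma_{L,K}(\overline u,\overline y)$ almost everywhere, and $g:=\Gamma_{L,K}(\overline u,\overline y)^\dagger\,\Lambda_{L,K}(u,y)$ lies in $L^2(\I,\R^{Lm+Kp})$ and realises \eqref{eq:FLrep}.

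For (2)$\Rightarrow$(1), \eqref{eq:FLrep} forces $\Lambda_{L,K}(u,y)(t)\in\im T$ a.e., so $x:=\mathcal{O}_{K-1}^\dagger\bigl(\Lambda_K(y)-\mathcal{T}_{K-1}\Lambda_K(u)\bigr)\in L^2(\I,\R^n)$ satisfies $\Lambda_K(y)=\mathcal{O}_{K-1}x+\mathcal{T}_{K-1}\Lambda_K(u)$ a.e.; its zeroth block-row reads $y=Cx+Du$. The crux will be to show that this $x$ also satisfies the state equation. Differentiating block-row $i$ of the identity distributionally and subtracting block-row $i+1$ cancels all input-derivative contributions and yields $CA^i(\dot{x}-Ax-Bu)=0$ for $i=0,\ldots,K-2$. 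Since $K-1\geq\lag(\B)$, $\mathcal{O}_{K-2}$ has full column rank, so $\dot{x}=Ax+Bu$ distributionally; as the right-hand side is in $L^2$, this forces $x\in H^1$, and \Cref{lem:sys} concludes $\col(u,y)\in\B$.
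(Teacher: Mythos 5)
Your proposal is correct. The first half coincides with the paper's argument: you introduce the same structured matrix (the paper calls it $\mathcal S_{L,K}$, writing $\mathcal T_{K-1}$ with the zero-padding left implicit where you make it explicit), derive the factorization $\Gamma_{L,K}(\overline u,\overline y)=T\,\Gamma_{L,1}(\overline u,\overline x)\,T^\top$, invoke \Cref{prop:PE1} for positive definiteness of $\Gamma_{L,1}(\overline u,\overline x)$, and obtain the rank statement and the implication (1)$\Rightarrow$(2) exactly as in the paper. The converse direction, however, is genuinely different. The paper passes to a shortest-lag input-output representation $P\tderb y=Q\tderb u$ with $\deg P,\deg Q\leq\lag(\B)$, observes that the padded coefficient row $\begin{bmatrix}\widetilde Q&\widetilde P\end{bmatrix}$ annihilates $\Gamma_{L,K}(\overline u,\overline y)$ and hence $\Lambda_{L,K}(u,y)$ pointwise, and concludes membership in $\B$ from the kernel representation; this is short but leans on the existence of the shortest-lag external representation and the elimination theorem. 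You instead reconstruct a candidate state $x=\mathcal O_{K-1}^\dagger\bigl(\Lambda_K(y)-\mathcal T_{K-1}\Lambda_K(u)\bigr)$, read off $y=Cx+Du$ from the zeroth block row, and verify $\dot x=Ax+Bu$ by distributionally differentiating block row $i$ and subtracting block row $i+1$ to isolate $CA^i(\dot x-Ax-Bu)=0$ for $i=0,\dots,K-2$, which suffices since $K-2\geq\lag(\B)-1$ makes $\mathcal O_{K-2}$ injective; regularity $x\in H^1$ then follows because $Ax+Bu\in L^2$, and \Cref{lem:sys} closes the argument. Your route is somewhat longer and requires care with distributional derivatives, but it stays entirely within the state-space picture and explicitly produces the state trajectory, whereas the paper's route is more economical at the price of importing the input-output representation machinery. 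Both are valid proofs of the theorem.
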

\begin{proof}
    Fix a minimal input-state-representation~\eqref{eq:sys} of $\B$ and let
    \begin{equation}
        \mathcal S_{L,K} = \begin{bmatrix}
            I_{mL} & 0\\
            \mathcal T_{K-1} & \mathcal O_{K-1}
        \end{bmatrix},
    \end{equation}
    where $\mathcal O_K$ is the Kalman observability matrix, see \eqref{eq:Ok}, and $\mathcal T_K$ is defined as in \eqref{eq:T}. Then given $\col(u,y)\in\B$ with corresponding state $x$ satisfies
    \begin{equation}
        \label{eq:SLambda}
        \Lambda_{L,K}(u,y) = \mathcal S_{L,K} \Lambda_{L,1}(u,x).%,\quad \Gamma_{L,K}(u,y) = \mathcal S \Gamma_{L,1}(u,x) \mathcal S_{L,K}. 
    \end{equation}
    
    Let $\overline x$ be the state corresponding to $\col(\overline u,\overline y)$. In a first step we show
    \begin{equation}
        \label{eq:images}
        \im \mathcal S_{L,K} = \im \Gamma_{L,K}(\overline u,\overline y).
    \end{equation} 
    Note that in order to show \eqref{eq:images} it suffices to prove
    \begin{equation}
        \label{eq:kernels}
        \ker \mathcal S_{L,K}^\top = \ker \Gamma_{L,K}(\overline u,\overline y).
    \end{equation}
    The former equality then follows by taking the orthogonal complements of the null spaces and employing the symmetry of $\Gamma_{L,K}(\overline u,\overline y)$. With \eqref{eq:SLambda} and \eqref{eq:Gammaux} one has
    \begin{equation}
        \Gamma_{L,K}(\overline u,\overline y) = \mathcal S_{L,K} \Gamma_{L,1}(\overline u,\overline x)\mathcal S_{L,K}^\top.
    \end{equation}
    By \Cref{prop:PE1} the matrix $\Gamma_{L,1}(\overline u,\overline x)$ is positive definite. This shows \eqref{eq:kernels}, cf.\ Observation~7.1.8 in \cite{Horn2012}. In particular,
    \begin{align*}
        \rank \Gamma_{L,K}(\overline{u},\overline{y}) &= \rank \mathcal S_{L,K} = Lm + \rank \mathcal O_{K-1}\\
        & = Lm + \n(\B).
    \end{align*}

    We show the implication (1) to (2). Let $\col(u,y)\in\B\cap H^L(\I,\R^{m+p})$ with state $x$. Then \eqref{eq:SLambda} holds. Therefore, with \eqref{eq:images} the function $\Lambda_{L,K}(u,y)$ maps pointwise a.e.\ into $\im \Gamma_{L,K}(\overline u,\overline y)$ and, thus, $g := \Gamma_{L,K}(\overline u,\overline y)^\dagger  \Lambda_{L,K}(u,y)$ satisfies \eqref{eq:FLrep}.

    We show the converse implication. Assume that \eqref{eq:FLrep} holds. We consider a input-output representation~\eqref{eq:io} of $\B$ with polynomial matrices $Q$ and $P$. It is no restriction to assume that the degree of $Q$ and $P$ is bounded by $\lag(\B)$, i.e.\ $Q(s)=\sum_{k=0}^{\lag(\B)} Q_k s^k$ and $P(s)=\sum_{k=0}^{\lag(\B)} P_k s^k$. Define
    \begin{align*}
        \widetilde Q &:= \begin{bmatrix}
            Q_0 & \dots & Q_{\lag(B)} & 0_{p\times m(L-\lag(\B)-1)}\end{bmatrix}\;,\\
        \widetilde P &:= \begin{bmatrix} P_0 & \dots & P_{\lag(\B)} & 0_{p\times p(K-\lag(\B)-1)}
        \end{bmatrix}\; .
    \end{align*}
   Since $\col(\overline{u},\overline{y})\in\B$, it holds that 
    \begin{equation*}
        \begin{bmatrix}
            \widetilde Q & \widetilde P
        \end{bmatrix} \Lambda_{L,K}(\overline u,\overline x)=0 
    \end{equation*}
    and, consequently, 
    \begin{equation*}
        \begin{bmatrix}
            \widetilde Q & \widetilde P
        \end{bmatrix} \Gamma_{L,K}(\overline u,\overline x)=0\; .
    \end{equation*}
    Therefore,
    \begin{equation*}
        \begin{bmatrix}
            \widetilde Q & \widetilde P
        \end{bmatrix} \Lambda_{L,K}(u,y) = \begin{bmatrix}
            \widetilde Q & \widetilde P
        \end{bmatrix} \Gamma_{L,K}(\overline u,\overline x) g =0,
    \end{equation*}
    that is~\eqref{eq:io} holds and $\col(u,y)\in\B$.
\end{proof}

\begin{remark}
    Instead of using the data matrix $\Gamma_{L,K}(\overline{u},\overline{y})$, any other matrix with the same image is suitable in the description of trajectories~\eqref{eq:FLrep}. One advantageous approach, especially from a numerical perspective, is to utilize the \emph{reduced singular value decomposition} of $\Gamma_{L}(\overline{u},\overline{y})$, i.e.  
   \begin{equation*}
    \Gamma_{L}(\overline{u},\overline{y})=U_1 \Sigma_1 V_1^\top,
    \end{equation*}
    with $\Sigma_1$ nonsingular of dimension equal to the rank of $\Gamma_{L}(\overline{u},\overline{y})$. Observe that the columns of $U_1$ form an orthogonal basis for $\im\Gamma_{L}(\overline{u},\overline{y})$.
\end{remark}

In the absence of a feedthrough term (i.e., $D=0$ in the input-state-output representation), the constraint $K\leq L$ in Theorem~\ref{thm:FL} can be relaxed to $K\leq L+1$. If, in addition, the state is directly observable (i.e.\ $C=I_n$), we get the following statement.
\begin{corollary}
    \label{cor:FL}
    Suppose 
    \begin{equation}
        \label{eq:sys_is}
        \B=\Big\{\col(u,x)\in L^2(\I,\R^{m+n})\,\Big|\, \tfrac{\diff}{\diff} x = Ax + Bu\Big\}
    \end{equation}
    is controllable. Let $\col(\overline u,\overline x)\in\B$ such that $\overline u$ is persistently exciting of order $2+n$. Consider the partition
    \begin{equation*}
        \begin{bmatrix}
            \Gamma_u\\
            \Gamma_x\\
            \Gamma_{x^{(1)}}
        \end{bmatrix} = \Gamma_{1,2}(\overline{u},\overline{x})
    \end{equation*}
    with $\Gamma_u\in\R^{m\times (m+2n)}$, $\Gamma_{x}, \Gamma_{x^{(1)}}\in\R^{n\times(m+2n)}$. Then, for $u\in L^2(\I,\R^m)$ and $x\in H^1(\I,\R^n)$ the following statements are equivalent:
    \begin{itemize}
        \item[(1)] $\col(u,x)\in\B$;
        \item[(2)] There exists $g\in L^2(\I, \R^{m+2n})$ such that
        \begin{equation}
            \label{eq:FLrep_is}
            u = \Gamma_u g,\quad x = \Gamma_x g,\quad \tfrac{\diff}{\diff t}\left(\Gamma_x g\right)=\Gamma_{x^{(1)}} g.
                %\Lambda_{1,2}(u,x) = \Gamma_{1,2}(\overline{u},\overline{x}) g.
        \end{equation}
    \end{itemize}
    Moreover, $\rank \Gamma_{1,2}(\overline{u},\overline{x}) = m + n$.
\end{corollary}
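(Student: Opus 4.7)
The plan is to specialize the argument of \Cref{thm:FL} and exploit the relaxation $K\leq L+1$ available when $D=0$, noted in the paragraph preceding the corollary. The key structural observation is that, because $\tfrac{\diff}{\diff t}x = Ax + Bu$, one has the pointwise identity
\begin{equation*}
\Lambda_{1,2}(u,x) = \mathcal{S}\,\Lambda_{1,1}(u,x), \quad \mathcal{S}:=\begin{bmatrix} I_m & 0 \\ 0 & I_n \\ B & A \end{bmatrix},
\end{equation*}
for every $\col(u,x)\in\B$, and $\mathcal{S}\in\R^{(m+2n)\times(m+n)}$ has full column rank $m+n$ because its first two block rows already contain $I_m$ and $I_n$. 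In particular, applying this identity to $\col(\overline u,\overline x)$ and integrating yields the factorization $\Gamma_{1,2}(\overline u,\overline x) = \mathcal{S}\,\Gamma_{1,1}(\overline u,\overline x)\,\mathcal{S}^\top$.

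Next I would invoke \Cref{prop:PE1} with $L=1$: its hypothesis that $\overline u$ is persistently exciting of order $n+1$ is strictly weaker than what the corollary assumes, and it yields positive definiteness of $\Gamma_{1,1}(\overline u,\overline x)$. Combined with the full column rank of $\mathcal{S}$ this delivers both $\im\Gamma_{1,2}(\overline u,\overline x) = \im\mathcal{S}$ and the rank claim $\rank\Gamma_{1,2}(\overline u,\overline x) = m+n$, analogous to the corresponding step in the proof of \Cref{thm:FL}.

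From here the two implications follow cleanly. For (1)$\Rightarrow$(2), any $\col(u,x)\in\B$ satisfies $\Lambda_{1,2}(u,x) = \mathcal{S}\Lambda_{1,1}(u,x)\in\im\Gamma_{1,2}(\overline u,\overline x)$ pointwise a.e., so $g := \Gamma_{1,2}(\overline u,\overline x)^\dagger \Lambda_{1,2}(u,x)$ solves \eqref{eq:FLrep_is}. For (2)$\Rightarrow$(1), I would use the left annihilator $\begin{bmatrix} -B & -A & I_n \end{bmatrix}$ of $\Lambda_{1,2}(\overline u,\overline x)$, obtained directly from $\overline x^{(1)} = A\overline x + B\overline u$; this vector also annihilates $\Gamma_{1,2}(\overline u,\overline x)$, and multiplying \eqref{eq:FLrep_is} on the left by it recovers the ODE $\tfrac{\diff}{\diff t}x = Ax + Bu$, whence $\col(u,x)\in\B$.

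The main obstacle is conceptual rather than computational: \Cref{thm:FL} formally requires $L\geq\lag(\B)+1=2$, but for the corollary I want to work with $L=1$, $K=2$. The point is that with $C=I_n$ and $D=0$ the state-derivative row $x^{(1)}=Ax+Bu$ contains no input derivatives, which collapses the selector $\mathcal{S}_{L,K}$ from the proof of \Cref{thm:FL} to the much simpler $\mathcal{S}$ above and lets the argument go through with the smaller Gramian $\Gamma_{1,1}$, on which \Cref{prop:PE1} is applicable under the stated persistency assumption.
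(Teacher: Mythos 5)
Your proof is correct and is essentially the intended argument: the paper gives no separate proof of \Cref{cor:FL}, presenting it as the specialization of \Cref{thm:FL} to $C=I_n$, $D=0$ with the relaxation $K\leq L+1$, and your factorization $\Gamma_{1,2}(\overline u,\overline x)=\mathcal S\,\Gamma_{1,1}(\overline u,\overline x)\,\mathcal S^\top$ with $\Gamma_{1,1}$ positive definite by \Cref{prop:PE1}, the pseudoinverse construction of $g$, and the annihilator $\begin{bmatrix}-B & -A & I_n\end{bmatrix}$ mirror exactly the corresponding steps ($\mathcal S_{L,K}$, $\Gamma_{L,1}$, and $[\widetilde Q\ \widetilde P]$) in the proof of \Cref{thm:FL}. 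Your observation that $D=0$ removes the input-derivative columns and thereby permits $K=L+1$ is precisely the point of the remark preceding the corollary.
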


Corollary~\ref{cor:FL} allows for an complete description of $\B$ based only on sufficiently informative data, without know\-ledge of the system matrices $A$ and $B$. Note that, however, the verification of condition~\eqref{eq:FLrep} involves solving a system of linear equations. The solution of a system of linear differential equations (with time-varying coefficients) arises also in the version of the fundamental lemma in \cite{Mueller22}, see Theorem 2 therein.

\subsection{System identification}
\label{subsec:ident}
    The data matrix, as applied in the fundamental lemma, enables the reconstruction of behavioral representations. Suppose that the assumptions of \Cref{cor:FL} hold. The representation~\eqref{eq:FLrep_is} is equivalent to
    \begin{equation}
        \label{eq:RM}
        \widetilde R\tderb \col(u,x) = \widetilde M\tderb g,
    \end{equation}
    where $\widetilde R$ and $\widetilde M$ are polynomial matrices given by
    \begin{equation}
        \widetilde R(s)=\begin{bmatrix}
            0 & 0\\
            I_m & 0\\
            0 & I_n
        \end{bmatrix},\ \widetilde M(s)=\begin{bmatrix}
            \Gamma_{x^{(1)}} - s\Gamma_x\\ \Gamma_u\\\Gamma_x
        \end{bmatrix}.
    \end{equation}
    Observe, that $g$ serves as a latent variable in the representation~\eqref{eq:RM}. 

    We are going to eliminate the latent variable $g$, cf.\ Theorem 6.2.6.\ in \cite{yellowbook} Let
    \begin{equation}
        \begin{bmatrix}
            \widetilde{B} & \widetilde{A}
        \end{bmatrix} = \Gamma_{x^{(1)}} \begin{bmatrix}
            \Gamma_u\\\Gamma_x
        \end{bmatrix}^\dagger, \quad \widetilde{B}\in\R^{n\times m},\ \widetilde{A}\in\R^{n\times n}\; .
    \end{equation}
    Note that 
    \begin{equation*}
        \rank \Gamma_{1,2}(\overline{u},\overline{x}) = \rank \Gamma_{1,1}(\overline{u},\overline{x}) = m+n,       
    \end{equation*}
    cf.\ \Cref{prop:PE1} and \Cref{cor:FL}, and $\Gamma_{1,1}(\overline{u},\overline{x})$ is a submatrix of $\Gamma_{1,2}(\overline{u},\overline{x})$. Therefore, the rows of $\Gamma_{x^{(1)}}$ are linearly dependent on those of $\begin{bmatrix}
        \Gamma_u^\top & \Gamma_x^\top
    \end{bmatrix}{}^\top$.
    As a consequence, multiplication with the unimodular matrix $U$,
    \begin{equation}
        U(s) = \begin{bmatrix}
            I_n & -\widetilde{B} & (sI_n-\widetilde{A})\\ 0&I_m&0\\ 0&0 &I_n
        \end{bmatrix},
    \end{equation}
    yields
    \begin{equation}
        \label{eq:uni}
        U(s)\begin{bmatrix}\widetilde{R}(s) & \widetilde{M}(s)\end{bmatrix} = \begin{bmatrix}
            -\widetilde B & (sI_n-\widetilde{A}) & 0\\
            I_m & 0 & \Gamma_u\\
            0&I_n & \Gamma_x
        \end{bmatrix}.
    \end{equation}
    Finally, using the first $n$ rows in $U(s)\widetilde{R}(s)$, a kernel representation~\eqref{eq:ker} of $\B$ is obtained,
    \begin{equation}
        \label{}
        R(s) = \begin{bmatrix}
            -\widetilde{B} & (s I_n-\widetilde{A})
        \end{bmatrix}
     \end{equation}
    It is not difficult to see that $\widetilde{A}$ and $\widetilde{B}$ (together with $C=I_n$, $D=0$) are suitable matrices for the input-state-output~model of~\eqref{eq:sys}.

\subsection{Expansion-based formulation}
Employing the polynomial lift, see \Cref{subsec:polylift}, we obtain the following two corollaries of \Cref{thm:FL} and \Cref{cor:FL}, respectively.
\begin{corollary}
    \label{cor:FL_poly}
    Let the assumption of \Cref{thm:FL} hold. Consider the partition
    \begin{equation*}
    \begin{bmatrix}
        \Gamma_u\\
        \Gamma_{u^{(1)}}\\
        \vdots\\
        \Gamma_{u^{(L-1)}}\\
        \Gamma_y\\
        \Gamma_{y^{(1)}}\\
        \vdots\\
        \Gamma_{y^{(L-1)}}
    \end{bmatrix} = \Gamma_{L,K}(\overline{u},\overline{y}),     
    \end{equation*}
    where $\Gamma_{u^{(j)}}\in\R^{m\times (Lm+Kp)}$, $j=0,\dots,L-1$, and $\Gamma_{y^{(k)}}\in\R^{p\times (Lm+Kp)}$, $k=0,\dots,K-1$. For $\col(u,y)\in H^{L-1}(\I,\R^q)$ with $\hat u =\Pi u$, $\hat y = \Pi y$ the following statements are equivalent:
    \begin{itemize}
        \item[(1)] $\col(u,y)\in\B$;
        \item[(2)] There exists $\hat g\in \ell^2(\N, \R^{Lm+Kp})$ such that
        \begin{equation}
            \label{eq:FLrep_poly}
            \begin{split}
                \hat u &= \Gamma_{u^{(0)}} \hat g,\\
                \hat y&= \Gamma_{y^{(0)}} \hat g,\\
                \mathcal D(\Gamma_{u^{(j-1)}}\hat g) &= \Gamma_{u^{(j)}}\hat g,\quad j=1,\dots,L-1,\\
                \mathcal D(\Gamma_{y^{(k-1)}}\hat g) &= \Gamma_{y^{(k)}}\hat g,\quad k=1,\dots,K-1,
            \end{split}
        \end{equation}
        where $\mathcal D$ is defined as in \eqref{eq:infMat}.
    \end{itemize}
    In this case the $k$-th derivative of $u$ is given by
    $u^{(k)} = \sum_{i\in\N} (\Gamma_{u^{(k)}} \hat g_i) \pi_i$; similarly for derivatives of $y$.
\end{corollary}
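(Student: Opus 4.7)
The plan is to reduce the corollary to \Cref{thm:FL} via the isometric isomorphism $\Pi$ from \Cref{subsec:polylift}, exploiting two facts: $\Pi$ is linear and commutes componentwise with left-multiplication by constant matrices, and by \eqref{eq:infMat} the diagonal action $\widehat{f^{(1)}}=\mathcal D\hat f$ holds whenever $f\in H^1$. Under the standing hypothesis $\col(u,y)\in H^{L-1}(\I,\R^q)$ together with $K\leq L$, all derivatives appearing in $\Lambda_{L,K}(u,y)$ and in the relations of \eqref{eq:FLrep_poly} are well defined in $L^2$, so $\Pi$ can be applied componentwise throughout.

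For the implication (1)$\Rightarrow$(2), \Cref{thm:FL} yields $g\in L^2(\I,\R^{Lm+Kp})$ with $\Lambda_{L,K}(u,y)=\Gamma_{L,K}(\overline{u},\overline{y})\,g$. Setting $\hat g:=\Pi g$ and reading off the block rows after applying $\Pi$ componentwise gives $\widehat{u^{(j)}}=\Gamma_{u^{(j)}}\hat g$ for $j=0,\dots,L-1$ and $\widehat{y^{(k)}}=\Gamma_{y^{(k)}}\hat g$ for $k=0,\dots,K-1$. Since $u^{(j-1)}\in H^1$ for $j\leq L-1$, \eqref{eq:infMat} gives $\widehat{u^{(j)}}=\mathcal D\widehat{u^{(j-1)}}$, and substituting yields the derivative identities $\mathcal D(\Gamma_{u^{(j-1)}}\hat g)=\Gamma_{u^{(j)}}\hat g$ in \eqref{eq:FLrep_poly}; the $y$-case is identical.

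For the converse (2)$\Rightarrow$(1), I define $g:=\Pi^{-1}\hat g\in L^2(\I,\R^{Lm+Kp})$ and prove $\widehat{u^{(j)}}=\Gamma_{u^{(j)}}\hat g$ by induction on $j$, the base case being $\hat u=\Gamma_{u^{(0)}}\hat g$. For the step, the hypothesis $u\in H^{L-1}$ ensures $u^{(j-1)}\in H^1$, so $\mathcal D\widehat{u^{(j-1)}}=\widehat{u^{(j)}}$; combined with the inductive hypothesis and the derivative relation from \eqref{eq:FLrep_poly}, this gives $\widehat{u^{(j)}}=\mathcal D(\Gamma_{u^{(j-1)}}\hat g)=\Gamma_{u^{(j)}}\hat g$. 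The same argument applies to $y$. Applying $\Pi^{-1}$ componentwise reassembles $\Lambda_{L,K}(u,y)=\Gamma_{L,K}(\overline{u},\overline{y})\,g$, and \Cref{thm:FL} yields $\col(u,y)\in\B$. The closing formula $u^{(k)}=\sum_{i\in\N}(\Gamma_{u^{(k)}}\hat g_i)\pi_i$ is then just the Legendre expansion of $u^{(k)}=\Pi^{-1}\widehat{u^{(k)}}$ under the identity $\widehat{u^{(k)}}=\Gamma_{u^{(k)}}\hat g$.

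The only real pitfall is keeping track of regularity at each induction step: the identification $\mathcal D\hat f=\widehat{f^{(1)}}$ is valid only for $f\in H^1$, so each upward step in the induction consumes one order of Sobolev regularity, which is precisely why the hypothesis $\col(u,y)\in H^{L-1}$ and the bound $K\leq L$ (inherited from \Cref{thm:FL}) are crucial for closing the argument.
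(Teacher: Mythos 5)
Your proof is correct and follows exactly the route the paper intends: the paper states \Cref{cor:FL_poly} without proof as an immediate consequence of \Cref{thm:FL} under the polynomial lift $\Pi$, and your argument simply makes that reduction explicit (transporting \eqref{eq:FLrep} through the isometric isomorphism and using $\widehat{f^{(1)}}=\mathcal D\hat f$ for $f\in H^1$, with the induction in the converse direction correctly budgeting the Sobolev regularity supplied by the standing hypothesis $\col(u,y)\in H^{L-1}$).
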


\begin{corollary}
    \label{cor:FL_poly_is}
    Let the assumption of \Cref{cor:FL} hold. Let 
    For $u \in L^2(\I,\R^m)$ and $x\in H^{1}(\I,\R^n)$ with $\hat u =\Pi u$, $\hat x = \Pi x$ the following statements are equivalent:
    \begin{itemize}
        \item[(1)] $\col(u,y)\in\B$;
        \item[(2)] There exists $\hat g\in \ell^2(\N, \R^{m+2n})$ such that
        \begin{equation}
            \label{eq:FLrep_poly_is}
            \hat u = \Gamma_u \hat g,\quad \hat x = \Gamma_x \hat g,\quad \mathcal D\left(\Gamma_x \hat g\right)=\Gamma_{x^{(1)}} \hat g,
        \end{equation}
        where $\mathcal D$ is  defined as in \eqref{eq:infMat}.
    \end{itemize}
\end{corollary}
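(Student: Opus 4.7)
The plan is to treat \Cref{cor:FL_poly_is} as the Legendre-coefficient transcription of \Cref{cor:FL} via the isometric isomorphism $\Pi:L^2(\I,\R^d)\to\ell^2(\N,\R^d)$ from \Cref{subsec:polylift}. Two structural properties of $\Pi$ are used throughout. First, since the Legendre coefficients are defined componentwise, $\Pi$ commutes with multiplication by any constant matrix $A$: $\Pi(Af)=A\,\Pi f$ for every $f\in L^2(\I,\R^d)$. Second, by construction of $\mathcal D$ in \eqref{eq:infMat}--\eqref{eq:Dmat}, one has $\Pi(\tfrac{\diff}{\diff t} f)=\mathcal D\,\Pi f$ for every $f\in H^1(\I,\R^d)$, and conversely $\mathcal D\,\Pi f\in\ell^2$ characterizes $f\in H^1$ with derivative represented by the sequence $\mathcal D\,\Pi f$.

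For the implication (1)\,$\Rightarrow$\,(2), I invoke \Cref{cor:FL} to obtain $g\in L^2(\I,\R^{m+2n})$ satisfying \eqref{eq:FLrep_is}, and set $\hat g:=\Pi g\in\ell^2(\N,\R^{m+2n})$. Applying $\Pi$ to the algebraic identities $u=\Gamma_u g$ and $x=\Gamma_x g$ yields $\hat u=\Gamma_u\hat g$ and $\hat x=\Gamma_x\hat g$ by the first property. Since $\Gamma_x g=x\in H^1(\I,\R^n)$, the second property delivers $\mathcal D(\Gamma_x\hat g)=\mathcal D\,\Pi(\Gamma_x g)=\Pi(\tfrac{\diff}{\diff t}(\Gamma_x g))=\Pi(\Gamma_{x^{(1)}}g)=\Gamma_{x^{(1)}}\hat g$, which is the last identity in \eqref{eq:FLrep_poly_is}.

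For (2)\,$\Rightarrow$\,(1), I define $g:=\Pi^{-1}\hat g\in L^2(\I,\R^{m+2n})$ and reverse the argument. Applying $\Pi^{-1}$ to the first two identities of \eqref{eq:FLrep_poly_is} produces $u=\Gamma_u g$ and $x=\Gamma_x g$ in $L^2$, so in particular $\Gamma_x g = x$, which is in $H^1$ by hypothesis. Hence the third equation of \eqref{eq:FLrep_is} makes sense, and applying $\Pi^{-1}$ to $\mathcal D(\Gamma_x\hat g)=\Gamma_{x^{(1)}}\hat g$ while using the correspondence between $\mathcal D$ and $\tfrac{\diff}{\diff t}$ gives $\tfrac{\diff}{\diff t}(\Gamma_x g)=\Gamma_{x^{(1)}}g$. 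Thus \eqref{eq:FLrep_is} holds, and \Cref{cor:FL} yields $\col(u,x)\in\B$.

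The main obstacle, such as it is, lies in the bookkeeping surrounding the differential identity in (2)\,$\Rightarrow$\,(1): one must be explicit that the $\ell^2$-equation $\mathcal D(\Gamma_x\hat g)=\Gamma_{x^{(1)}}\hat g$ transfers back to a genuine $L^2$-equation $\tfrac{\diff}{\diff t}(\Gamma_x g)=\Gamma_{x^{(1)}}g$. This is exactly the content of the operator correspondence in \eqref{eq:infMat}, but since $\mathcal D$ is an unbounded operator on $\ell^2$, care must be taken to first establish (via the hypothesis $x\in H^1$) that $\Gamma_x g=x$ belongs to the domain of $\mathcal D$ under $\Pi$, rather than reading \eqref{eq:FLrep_poly_is} as a purely formal manipulation of sequences.
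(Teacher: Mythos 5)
Your proof is correct and follows exactly the route the paper intends: the paper gives no explicit proof of \Cref{cor:FL_poly_is}, presenting it as an immediate consequence of \Cref{cor:FL} ``employing the polynomial lift,'' which is precisely your transcription via the isometry $\Pi$, its commutation with constant matrices, and the correspondence $\Pi(\tfrac{\diff}{\diff t}f)=\mathcal D\,\Pi f$ on $H^1$. Your explicit attention to the domain issue for the unbounded operator $\mathcal D$ in the direction (2)$\Rightarrow$(1) is a useful addition that the paper leaves implicit.
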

Conditions \eqref{eq:FLrep_poly} and \eqref{eq:FLrep_poly_is} are formulated in terms of infinite series, meaning that each coefficients $\hat g_i$ for $i\in\N$ must satisfy specific linear equations. This complicates numerical computations. Limiting considerations on polynomial trajectories, i.e.\ $\col(u,y)\in\B\cap \im P_N$, this infinite equation system is equivalently reduced to a finite one, assuming $\hat g_i=0$ for all $i\geq N$.

\section{Data-driven optimal control}  \label{sec:dataOptCon}
Finally, utilizing the approximation result of \Cref{sec:LQR_approx} in conjunction with the fundamental lemma, we propose a data-driven approach for optimal control of input-output systems (see also \cite{ChuR23} for a recent application of orthogonal bases of functions in iteratively  solving finite-length continuous-time tracking problems).

\subsection{Data-driven formulatation}
Let the assumptions of \Cref{thm:FL} and \Cref{cor:FL_poly} (with $K=L=\lag(\B)+1$) hold. We consider the optimization problem
\begin{subequations}
\label{eq:docp}
\begin{align}
\label{eq:docp1}
    \operatorname*{minimize}_{\hat g\in \ell^2(\N, \R^{Lq})}\ \sum_{i<N} \bigl(\norm{\Gamma_{y^{(0)}}\hat g_i}_2^2 + \norm{\Gamma_{u^{(\lag(\B))}} \hat g_i}^2_2\bigr)\norm{\pi_i}^2
\end{align}
subject to
\begin{align}
    \label{eq:docp3}
    \hat g_i&=0,\qquad i\geq N,\\
    \label{eq:docp2}
    \mathcal D(\Gamma_{u^{(k-1)}}\hat g) &= \Gamma_{u^{(k)}}\hat g,\\%\quad j=1,\dots,L-1,\\
    \label{eq:docp22}
    \mathcal D(\Gamma_{u^{(k-1)}}\hat g) &= \Gamma_{u^{(k)}}\hat g,\qquad k=1,\dots,\lag(\B),\\
    \label{eq:docp4}
    \xi^0 &= \sum_{i<N} (-1)^i\begin{bmatrix}
        \Gamma_{u^{(0)}}\\
        \Gamma_{y^{(0)}}\\
        \vdots\\
        \Gamma_{u^{(\lag(\B)-1)}}\\
        \Gamma_{y^{(\lag(\B)-1)}}
    \end{bmatrix}\hat g_i.
\end{align}
\end{subequations}
Note, that the relationship between optimization problems~\eqref{eq:docp} and~\eqref{eq:LQRapprox} is established by
\begin{equation}
    \label{eq:uygrel}
    \begin{split}
    \Pi u= \hat u =\Gamma_{u^{(0)}}\hat g,&\qquad \Pi y= \hat y=\Gamma_{y^{(0)}}\hat g,\\
    u^{(k)} = \sum_{i<N} (\Gamma_{u^{(k)}}\hat g_i) \pi_i, &\qquad y^{(k)} = \sum_{i<N} (\Gamma_{y^{(k)}}\hat g_i) \pi_i.
    \end{split}
\end{equation}
Constraints~\eqref{eq:docp2} and \eqref{eq:docp22} ensure that $w=\col(u,y)\in\mathcal \B$, while constraint~\eqref{eq:docp3} guarantees $w\in\im P_N$. The initial condition $\Lambda_{\lag(\B)}(w)(-1)=\xi^0$ is reflected by~\eqref{eq:docp4}, where $\pi_i(-1)=(-1)^i$ is used. 

The following proposition summarizes the relationship between the polynomially restricted LQR problem~\eqref{eq:LQRapprox} and its data-driven formulation~\eqref{eq:docp}.
\begin{proposition}
    \label{prop:dd}
    Let the assumptions of \Cref{thm:FL} and \Cref{cor:FL_poly} hold. Then the polynomnially restricted LQR problem~\eqref{eq:LQRapprox} and the data-driven LQR problem~\eqref{eq:docp} are equivalent in the sense that $w=\col(u,y)$ solves~\eqref{eq:LQRapprox} if and only if $\hat g$ is a solution to \eqref{eq:docp} such that \eqref{eq:uygrel} holds. In particular, their optimal values coincide.
\end{proposition}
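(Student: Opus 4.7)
The plan is to establish a bijection between the feasible sets of \eqref{eq:LQRapprox} and \eqref{eq:docp} that preserves the objective values. The three ingredients are \Cref{cor:FL_poly}, the Parseval identity $\norm{f}^2 = \sum_{i\in\N}\norm{\hat f_i}_2^2\norm{\pi_i}^2$ for $f\in L^2(\I,\R^d)$, and the evaluation $\pi_i(-1)=(-1)^i$.

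First, I would show that any feasible $w=\col(u,y)$ for \eqref{eq:LQRapprox} corresponds to a feasible $\hat g$ for \eqref{eq:docp} with \eqref{eq:uygrel} holding. Since $w\in\B$, \Cref{thm:FL} guarantees the existence of an $L^2$-function $g$ satisfying $\Lambda_{L,K}(u,y)=\Gamma_{L,K}(\overline u,\overline y)g$. A concrete such $g$ is given explicitly by $g(t):=\Gamma_{L,K}(\overline u,\overline y)^\dagger \Lambda_{L,K}(u,y)(t)$. Because $w\in\im P_N$ and differentiation does not increase the polynomial degree, every component of $\Lambda_{L,K}(u,y)$ is a polynomial of degree at most $N-1$; applying a constant matrix preserves this, so $g\in\im P_N$, which gives $\hat g_i=0$ for $i\geq N$, i.e.\ constraint \eqref{eq:docp3}. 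Applying $\Pi$ to the pointwise identity and invoking the operational representation $\mathcal D$ of $\tfrac{\diff}{\diff t}$ (see \Cref{subsec:polylift}) yields the behavioral constraints \eqref{eq:docp2} and \eqref{eq:docp22}. Finally, \eqref{eq:LQRapprox2} translates to \eqref{eq:docp4} by expanding $\Lambda_{\lag(\B)}(w)$ in Legendre polynomials, using $\hat w_i = \col(\Gamma_{u^{(0)}}\hat g_i,\Gamma_{y^{(0)}}\hat g_i,\ldots)$ and evaluating at $t=-1$ via $\pi_i(-1)=(-1)^i$.

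For the converse direction, given a feasible $\hat g$ for \eqref{eq:docp}, I define $u,y$ through \eqref{eq:uygrel}. Then $\col(u,y)\in\B$ by \Cref{cor:FL_poly}; constraint \eqref{eq:docp3} forces $\hat u_i=\Gamma_{u^{(0)}}\hat g_i=0$ and $\hat y_i=\Gamma_{y^{(0)}}\hat g_i=0$ for $i\geq N$, so that $w\in\im P_N$; and \eqref{eq:docp4} reproduces \eqref{eq:LQRapprox2}. The objective values coincide under this bijection by Parseval's identity: since $y\in\im P_N$ and $u^{(\lag(\B))}\in\im P_N$, with $\widehat{y}_i=\Gamma_{y^{(0)}}\hat g_i$ and $\widehat{u^{(\lag(\B))}}_i=\Gamma_{u^{(\lag(\B))}}\hat g_i$ (as in \Cref{cor:FL_poly}), one obtains
\[
J(w) = \norm{y}^2+\norm{u^{(\lag(\B))}}^2 = \sum_{i<N}\bigl(\norm{\Gamma_{y^{(0)}}\hat g_i}_2^2+\norm{\Gamma_{u^{(\lag(\B))}}\hat g_i}_2^2\bigr)\norm{\pi_i}^2,
\]
which is precisely \eqref{eq:docp1}. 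Equivalence of minimizers and of optimal values follows.

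The only mildly subtle point I anticipate is justifying that the existential $\hat g$ from \Cref{cor:FL_poly} can be taken with finite support when $w\in\im P_N$, since the corollary only asserts existence in $\ell^2(\N,\R^{Lq})$ without constraining the support. This obstacle is handled cleanly by the Moore--Penrose pseudoinverse construction $g=\Gamma_{L,K}(\overline u,\overline y)^\dagger\Lambda_{L,K}(u,y)$, which automatically inherits the polynomial degree bound from $\Lambda_{L,K}(u,y)$.
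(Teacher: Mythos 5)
Your proof is correct and follows the same overall architecture as the paper's: identify feasible points of \eqref{eq:LQRapprox} and \eqref{eq:docp} via \eqref{eq:uygrel} and \Cref{cor:FL_poly}, and match the objectives through Parseval's identity. Where you genuinely diverge is in how the support constraint \eqref{eq:docp3} is reconciled with the trajectory constraint $w\in\im P_N$ --- which is indeed the only nontrivial step, since \Cref{cor:FL_poly} only asserts existence of \emph{some} $\hat g\in\ell^2$ and the natural translation of $w\in\im P_N$ is the weaker condition $\Gamma_{u^{(0)}}\hat g_i=0$, $\Gamma_{y^{(0)}}\hat g_i=0$ for $i\geq N$. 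The paper works with a modified problem carrying this weaker constraint, shows (using \eqref{eq:docp2} and \eqref{eq:docp22}) that it propagates to $\Gamma_{u^{(k)}}\hat g_i=0$ and $\Gamma_{y^{(k)}}\hat g_i=0$ for all $k=0,\dots,\lag(\B)$ and $i\geq N$, and concludes that the tail of $\hat g$ may be set to zero without affecting feasibility or cost, because $\hat g_i$ enters the problem only through those products. You instead construct a representer with finite support outright, taking $g=\Gamma_{L,K}(\overline u,\overline y)^\dagger\Lambda_{L,K}(u,y)$, which is a polynomial of degree at most $N-1$ whenever $w$ is, so that $\hat g_i=0$ for $i\geq N$ holds automatically; the remaining constraints then follow by applying $\Pi$ and the operational representation of $\tfrac{\diff}{\diff t}$ exactly as you describe. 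Both resolutions are valid; yours is more constructive and shorter for the forward direction, while the paper's argument makes explicit that nothing is lost by hard-coding $\hat g_i=0$ in \eqref{eq:docp} in place of the weaker image condition.
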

\begin{proof}
    Via the relationship~\eqref{eq:uygrel} the target function in \eqref{eq:LQRapprox} can be equivalently rewritten into that in \eqref{eq:docp1}. Further, \Cref{cor:FL_poly} directly yields the equivalence of LQR problem~\eqref{eq:LQRapprox} and a modified data-driven formulation of \eqref{eq:docp}, where in the latter problem the constraint \eqref{eq:docp3} is replaced by the seemingly more restrictive constraint
    \begin{equation}
    \label{eq:docp3_mod}
        \begin{bmatrix}
            \Gamma_{u^{0}}\\ \Gamma_{y^{0}}
        \end{bmatrix} \hat g_i = 0,\qquad i\geq N.
    \end{equation}
    Note that the modified condition \eqref{eq:docp3_mod} in combination with \eqref{eq:uygrel} is equivalent to $\col(u,y)\in\im P_N$. Replacing \eqref{eq:docp3} with \eqref{eq:docp3_mod}, however, does not affect the feasibility or optimality of a trajectory $\col(u,y)$ with \eqref{eq:uygrel}. Indeed, this follows form the fact that \eqref{eq:docp3_mod} together with \eqref{eq:docp2}, \eqref{eq:docp22} implies
    \begin{equation}
        \begin{bmatrix}
            \Gamma_{u^{k}}\\ \Gamma_{y^{k}}
        \end{bmatrix} \hat g_i = 0,\qquad i\geq N,\quad k=0,\dots, \lag(\B)
    \end{equation}
    and $\hat g_i$ only appears in the modified problem, when accompanied by $\Gamma_{u^{k}}$ or $\Gamma_{y^{k}}$.
\end{proof}
The approximation result in \Cref{thm:conv_optm} yields asymptotic bounds on the optimality gap between the data-driven LQR problem~\eqref{eq:docp} and the original LQR problem~\eqref{eq:LQR}. We emphasize that  the allowed polynomial approximation order $N$ does not depend in the persistency of excitation order of the data, that is the same informative data trajectory $(\overline u, \overline y)$ can utilized for different $N$. \Cref{fig:scheme} illustrates how the various results in this paper integrate to derive a solution to the LQR problem~\eqref{eq:LQR} via the data-driven LQR formulation~\eqref{eq:docp}.

\begin{figure}[hbt]
    \centering
    \begin{tikzpicture}
    \node[draw, rectangle, text width=80pt, align=center, inner sep=5,outer sep=5] (LQR) {LQR~\eqref{eq:LQR}};
    \node[draw, rectangle, text width=80pt, inner sep=5,outer sep=5, align=center, below=40pt of LQR] (LQRpol) {polynomially restricted LQR~\eqref{eq:LQRapprox}};
    \node[draw, rectangle, text width=80pt, align=center, inner sep=5,outer sep=5, below=50pt of LQRpol] (LQRdd) {data-driven LQR~\eqref{eq:docp}};
    \draw[-{Triangle[width=18pt,length=8pt]}, line width=10pt, color=lightgray](LQRpol) -- (LQR) node[midway, right= 10pt, text width=80pt, align=center, text=black] {approximation\\properties from \\\Cref{thm:conv_optm}};
    \draw[-{Triangle[width=18pt,length=8pt]}, line width=10pt, text=black, color=lightgray](LQRdd) -- (LQRpol) node[midway, right= 10pt, text width=100pt, align=center, text=black] {``fundamental lemma''\\ \Cref{thm:FL}, \Cref{cor:FL_poly}, \Cref{prop:dd}};
    \end{tikzpicture}
    \caption{A schematic overview of the relations of the different LQR formulations.}
    \label{fig:scheme}
\end{figure}
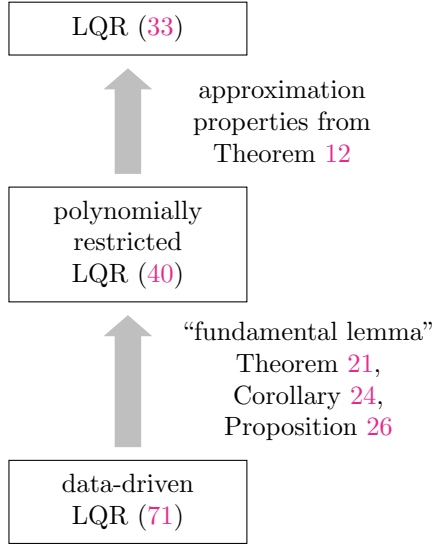

Due to \eqref{eq:docp3}, the optimization problem \eqref{eq:docp} can be rewritten as a finite-dimensional quadratic program. In this context, $\mathcal D$ in constraints~\eqref{eq:docp2} and~\eqref{eq:docp22} is replaced with some upper-left square submatrix of the infinite matrix representation of $\mathcal D$ in \eqref{eq:Dmat}.

Note, that instead of the polynomially restricted LQR problem~\eqref{eq:LQRapprox} one could likewise derive a data-driven formulation of the unrestricted LQR problem~\eqref{eq:LQR} using \Cref{cor:FL_poly}. Since the resulting problem does not include a condition like \eqref{eq:docp3}, meaning it involves infinitely many coupled equations, finding a numerical solution, however, seems intractable.

Similarly to the previous approach, consider the scenario described in \Cref{cor:FL}. The LQR problem~\eqref{eq:LQR_nofeed} in \Cref{rem:cost_nofeed}, constraint to polynomial trajectories, is equivalent to the data-driven optimization problem
\begin{subequations}
    \label{eq:docp_is}
    \begin{align}
    \label{eq:docp1_is}
        \operatorname*{minimize}_{\hat g\in \ell^2(\N, \R^{2n+m})}\ \sum_{i<N} \bigl(\norm{\Gamma_{x}\hat g_i}_2^2 + \norm{\Gamma_{u} \hat g_i}^2_2\bigr)\norm{\pi_i}^2
    \end{align}
    subject to
    \begin{align}
        \label{eq:docp3_is}
        \hat g_i&=0,\qquad i\geq N,\\
        \label{eq:docp2p}
        \mathcal D(\Gamma_{x}\hat g) &= \Gamma_{x^{(1)}}\hat g,\\
        \label{eq:docp4_is}
        x^0 &= \sum_{i<N} (-1)^i\Gamma_x\hat g_i,
    \end{align}
    \end{subequations}
    cf.\ \Cref{cor:FL_poly_is} and the proof of \Cref{prop:dd}.

\subsection{Numerical example}
We illustrate the numerical feasibility of the data-driven optimal control scheme involving the fundamental lemma consider the LQR
\begin{subequations}
\label{eq:simpleLQR}
\begin{align}
    \operatorname*{minimize}_{\col(u,x)}&\, \int_{-1}^1 \abs{u(t)}^2 + \abs{x(t)}^2\,\diff t\\
    \tfrac{\diff }{\diff t} x &= -x + u,\quad x(-1) = 1\,.
\end{align}
\end{subequations}
By Pontryagin's minimum principle the optimal trajectory $\col(u^\star,x^\star)$ to~\eqref{eq:simpleLQR} together with its co-variable $\lambda^\star$ satisfies
\begin{align*}
    \tfrac{\diff }{\diff t} x^\star &= -x^\star + u^\star,\quad x^\star(-1)=1\\
    \tfrac{\diff }{\diff t} \lambda^\star &= \lambda^\star -x^\star,\quad \lambda^\star(1)=0\\
    u^\star &=-\lambda^\star
\end{align*}
and one finds
\begin{align*}
    x^\star(t) &= \alpha\mathrm e^{-\sqrt{2}t}  \frac{(\sqrt{2}-2)\mathrm e^{2\sqrt{2}t} - (\sqrt{2}+2)\mathrm e^{2\sqrt{2}}}{ \sqrt{2}  ( \mathrm e^{2\sqrt{2}} - 1 ) }\\
    u^\star(t) &= -\alpha\mathrm e^{-\sqrt{2}t} \frac{ \mathrm e^{2\sqrt{2}t} - \mathrm e^{2\sqrt{2}} }{ \mathrm e^{2\sqrt{2}} - 1 }
\end{align*}
with a normalization constant $\alpha$ to ensure $x^\star(-1)=1$. The optimal value is $J^\star \approx 0.4125$. 

Note, that the underlying system has McMillan degree $\n(\B)=1$ and lag $\lag(\B)=1$. We consider the trajectory $\col(\overline u,\overline x)$,
\begin{equation}
    \overline u(t) = t^2, \quad \overline x(t) = t^2-2t-5\mathrm e^{-(t+1)}+2,
\end{equation}
where $\overline u$ is persistently exciting of order $3$, see \Cref{ex:pe_poly}. The smallest eigenvalue of $\Gamma_{3}(\overline u)$ is approximately $0.1729$. We numerically solve the polynomially restricted optimal control problem, cf.~\eqref{eq:LQRapprox}, in its data-driven formulation~\eqref{eq:docp_is} for different polynomial orders $N$. The resulting time-domain trajectories reconstructed from the expansion coefficients are illustrated in \Cref{fig:1}. The deviations between the optimal value $J^\star$ and the optima of the data-driven problems are presented in \Cref{tab:1}. The numerical results align with the theoretical convergence order described in \Cref{thm:conv_optm}. The \texttt{Matlab} code that produced the numerical results is available.\footnote{ \url{https://github.com/schmitzph/contDdOC}}

\begin{table}[hbt]
        \centering
        \caption{The error between the optimal value $J^\star$ and the optimal value $J^N=J(\col(u^N,x^N))$ of the data-driven LQR problem with respect to polynomial trajectories in $\im P_N$. }
        \label{tab:1}
        \begin{tabular}{l@{\hskip 20pt}l}
            \toprule
            $N$ & $J^N-J^\star$\\
            \midrule
            1 & $3.59\cdot 10^0$\\
            2 & $4.11\cdot 10^{-1}$\\
            3 & $3.36\cdot 10^{-2}$\\
            4 & $1.70\cdot 10^{-3}$\\
            5 & $4.79\cdot 10^{-5}$\\
            \bottomrule
        \end{tabular}%
        \hspace{20pt}\begin{tabular}{l@{\hskip 20pt}l}
            \toprule
           $N$ & $J^N-J^\star$\\
            \midrule
            6 & $9.58\cdot 10^{-7}$\\
            7 & $1.25\cdot 10^{-8}$\\
            8 & $1.30\cdot 10^{-10}$ \\
            9 & $9.72\cdot 10^{-13}$\\
            10 & $1.73\cdot 10^{-14}$\\
            \bottomrule
        \end{tabular}
\end{table}

\begin{figure}[hbt]
    %\centering
    \pgfplotsset{cycle list/Dark2}
\begin{tikzpicture}%[show background rectangle]

\begin{axis}[%
width=2.55in,
height=1.3in,
at={(0in,1.65in)},
scale only axis,
xmin=-1,
xmax=1,
ymin=-0.588235305817472,
ymax=0.5,
axis x line*=bottom,
axis y line*=left,
clip=false,
legend style={draw=none, at={(1.09,0.97)},inner sep=0pt},
xtick={-1,0,1},
ylabel={input}
]

\legend{,,,,$u^\star$}
% \addplot [color=black, forget plot]
%   table[row sep=crcr]{%
% -1	1\\
% -0.9	1\\
% -0.8	1\\
% -0.7	1\\
% -0.6	1\\
% -0.5	1\\
% -0.4	1\\
% -0.3	1\\
% -0.2	1\\
% -0.1	1\\
% 0	1\\
% 0.1	1\\
% 0.2	1\\
% 0.3	1\\
% 0.4	1\\
% 0.5	1\\
% 0.6	1\\
% 0.7	1\\
% 0.8	1\\
% 0.9	1\\
% 1	1\\
%   };
\addplot +[ultra thick]
  table[row sep=crcr]{%
-1	0.470588231394176\\
-0.9	0.417647054533593\\
-0.8	0.364705877673011\\
-0.7	0.311764700812429\\
-0.6	0.258823523951846\\
-0.5	0.205882347091264\\
-0.4	0.152941170230682\\
-0.3	0.0999999933700992\\
-0.2	0.0470588165095168\\
-0.1	-0.00588236035106553\\
0	-0.0588235372116479\\
0.1	-0.11176471407223\\
0.2	-0.164705890932813\\
0.3	-0.217647067793395\\
0.4	-0.270588244653977\\
0.5	-0.32352942151456\\
0.6	-0.376470598375142\\
0.7	-0.429411775235724\\
0.8	-0.482352952096307\\
0.9	-0.535294128956889\\
1	-0.588235305817472\\
} node[pos=0.75, below] {$u^2$};

\addplot +[ultra thick]
  table[row sep=crcr]{%
-1	-0.0601503776227316\\
-0.9	-0.103007520374129\\
-0.8	-0.139849625505443\\
-0.7	-0.170676693016674\\
-0.6	-0.195488722907821\\
-0.5	-0.214285715178885\\
-0.4	-0.227067669829866\\
-0.3	-0.233834586860763\\
-0.2	-0.234586466271577\\
-0.1	-0.229323308062307\\
0	-0.218045112232954\\
0.1	-0.200751878783518\\
0.2	-0.177443607713998\\
0.3	-0.148120299024395\\
0.4	-0.112781952714709\\
0.5	-0.0714285687849389\\
0.6	-0.0240601472350857\\
0.7	0.0293233119348509\\
0.8	0.0887218087248709\\
0.9	0.154135343134974\\
1	0.225563915165161\\
} node[pos=1.0, right, yshift=3pt] {$u^3$};

\addplot +[ultra thick]
  table[row sep=crcr]{%
-1	-0.320161212764616\\
-0.9	-0.311734845011444\\
-0.8	-0.298154259874701\\
-0.7	-0.280201842911808\\
-0.6	-0.258659979680187\\
-0.5	-0.234311055737258\\
-0.4	-0.207937456640443\\
-0.3	-0.180321567947163\\
-0.2	-0.152245775214839\\
-0.1	-0.124492464000893\\
0	-0.0978440198627457\\
0.1	-0.073082828357818\\
0.2	-0.0509912750435314\\
0.3	-0.032351745477307\\
0.4	-0.0179466252165661\\
0.5	-0.0085582998187298\\
0.6	-0.00496915484121936\\
0.7	-0.00796157584145595\\
0.8	-0.0183179483768608\\
0.9	-0.036820658004855\\
1	-0.0642520902828599\\
} node[pos=1.0, right] {$u^4$};

\addplot +[ultra thick]
  table[row sep=crcr]{%
-1	-0.395356624518486\\
-0.9	-0.353691840192539\\
-0.8	-0.313353513912976\\
-0.7	-0.274959200544653\\
-0.6	-0.239016924553944\\
-0.5	-0.205925180008734\\
-0.4	-0.17597293057843\\
-0.3	-0.149339609533952\\
-0.2	-0.126095119747735\\
-0.1	-0.106199833693731\\
0	-0.0895045934474095\\
0.1	-0.0757507106857538\\
0.2	-0.0645699666872642\\
0.3	-0.0554846123319566\\
0.4	-0.0479073681013633\\
0.5	-0.0411414240785323\\
0.6	-0.0343804399480275\\
0.7	-0.026708544995929\\
0.8	-0.0171003381098328\\
0.9	-0.00442088777885069\\
1	0.0125742679063892\\
} node[pos=1.0, right, yshift=4pt] {$u^5$};

% optimal input
\addplot +[dashed, ultra thick, color=black]
  table[row sep=crcr]{%
-1	-0.412519252644955\\
-0.9	-0.357707227505985\\
-0.8	-0.310061278443274\\
-0.7	-0.268626897218088\\
-0.6	-0.23257401413879\\
-0.5	-0.201180368979563\\
-0.4	-0.173817041684499\\
-0.3	-0.149935852988495\\
-0.2	-0.129058382547126\\
-0.1	-0.110766384571772\\
0	-0.0946934089630278\\
0.1	-0.0805174600855589\\
0.2	-0.0679545461150535\\
0.3	-0.0567529897290267\\
0.4	-0.0466883861655329\\
0.5	-0.0375591076427789\\
0.6	-0.0291822640780802\\
0.7	-0.0213900391858054\\
0.8	-0.014026328554057\\
0.9	-0.00694361234948237\\
1	0\\
};
\end{axis}

\begin{axis}[%
width=2.5in,
height=1.3in,
at={(0in,0.0in)},
scale only axis,
xmin=-1,
xmax=1,
ymin=-0.2,
ymax=1,
axis background/.style={fill=white},
axis x line*=bottom,
axis y line*=left,
clip=false,
legend style={draw=none, at={(1.09,0.97)},inner sep=0pt},
xtick={-1,0,1},
xlabel={time},
ylabel={state}
]
\legend{,,,,$x^\star$}

% \addplot [color=black, forget plot]
%   table[row sep=crcr]{%
% -1	1\\
% -0.9	1\\
% -0.8	1\\
% -0.7	1\\
% -0.6	1\\
% -0.5	1\\
% -0.4	1\\
% -0.3	1\\
% -0.2	1\\
% -0.1	1\\
% 0	1\\
% 0.1	1\\
% 0.2	1\\
% 0.3	1\\
% 0.4	1\\
% 0.5	1\\
% 0.6	1\\
% 0.7	1\\
% 0.8	1\\
% 0.9	1\\
% 1	1\\
% };
\addplot +[ultra thick]
  table[row sep=crcr]{%
-1	1\\
-0.9	0.947058823139418\\
-0.8	0.894117646278835\\
-0.7	0.841176469418253\\
-0.6	0.78823529255767\\
-0.5	0.735294115697088\\
-0.4	0.682352938836506\\
-0.3	0.629411761975923\\
-0.2	0.576470585115341\\
-0.1	0.523529408254758\\
0	0.470588231394176\\
0.1	0.417647054533594\\
0.2	0.364705877673011\\
0.3	0.311764700812429\\
0.4	0.258823523951846\\
0.5	0.205882347091264\\
0.6	0.152941170230682\\
0.7	0.0999999933700992\\
0.8	0.0470588165095168\\
0.9	-0.0058823603510656\\
1	-0.058823537211648\\
}  node[pos=0.5,above] {$x^2$};

\addplot +[ultra thick]
  table[row sep=crcr]{%
-1	1\\
-0.9	0.896992481047769\\
-0.8	0.799999999715621\\
-0.7	0.709022556003556\\
-0.6	0.624060149911575\\
-0.5	0.545112781439677\\
-0.4	0.472180450587862\\
-0.3	0.405263157356131\\
-0.2	0.344360901744484\\
-0.1	0.289473683752919\\
0	0.240601503381438\\
0.1	0.197744360630041\\
0.2	0.160902255498726\\
0.3	0.130075187987496\\
0.4	0.105263158096348\\
0.5	0.0864661658252843\\
0.6	0.0736842111743037\\
0.7	0.0669172941434064\\
0.8	0.0661654147325925\\
0.9	0.0714285729418621\\
1	0.082706768771215\\
}  node[pos=0.8, below] {$x^3$};

\addplot +[ultra thick]
  table[row sep=crcr]{%
-1	1\\
-0.9	0.874733710388237\\
-0.8	0.762445493734257\\
-0.7	0.662352964480639\\
-0.6	0.573673737069961\\
-0.5	0.495625425944802\\
-0.4	0.427425645547742\\
-0.3	0.368292010321358\\
-0.2	0.31744213470823\\
-0.1	0.274093633150937\\
0	0.237464120092057\\
0.1	0.206771209974169\\
0.2	0.181232517239853\\
0.3	0.160065656331686\\
0.4	0.142488241692247\\
0.5	0.127717887764116\\
0.6	0.114972208989871\\
0.7	0.103468819812091\\
0.8	0.0924253346733544\\
0.9	0.0810593680162405\\
1	0.0685885342833281\\
}  node[pos=1.0,right, yshift=8pt] {$x^4$};

\addplot +[ultra thick]
  table[row sep=crcr]{%
-1	1\\
-0.9	0.869237564319172\\
-0.8	0.754824911081345\\
-0.7	0.655049181436823\\
-0.6	0.568307046934393\\
-0.5	0.493104709521327\\
-0.4	0.42805790154338\\
-0.3	0.371891885744792\\
-0.2	0.323441455268286\\
-0.1	0.281650933655071\\
0	0.245574174844837\\
0.1	0.214374563175761\\
0.2	0.187325013384502\\
0.3	0.163807970606203\\
0.4	0.143315410374493\\
0.5	0.125448838621483\\
0.6	0.109919291677769\\
0.7	0.096547336272431\\
0.8	0.0852630695330323\\
0.9	0.0761061189856212\\
1	0.0692256425547293\\
}  node[pos=1.0, right, yshift=-1pt] {$x^5$};

% optimal output
\addplot +[dashed, ultra thick, color=black]
  table[row sep=crcr]{%
-1	1\\
-0.9	0.868293441702347\\
-0.8	0.753981714655767\\
-0.7	0.654774771387259\\
-0.6	0.568685163927957\\
-0.5	0.493988228561308\\
-0.4	0.429187535021608\\
-0.3	0.372984907974415\\
-0.2	0.32425442020849\\
-0.1	0.282019836535637\\
0	0.245435056524061\\
0.1	0.213767164267577\\
0.2	0.186381745620665\\
0.3	0.162730178754317\\
0.4	0.142338643419866\\
0.5	0.124798628739469\\
0.6	0.109758749362447\\
0.7	0.0969177060376072\\
0.8	0.086018249578217\\
0.9	0.0768420272975765\\
1	0.0692052086720081\\
};

\end{axis}
\end{tikzpicture}%
    \caption{The optimal trajectory $w^\star=\col(u^\star,x^\star)$ (dashed, black) and approximative optimal trajectories $w^N=\col(u^N,x^N)$ for $N=2,3,4,5$.}
    \label{fig:1}
\end{figure}
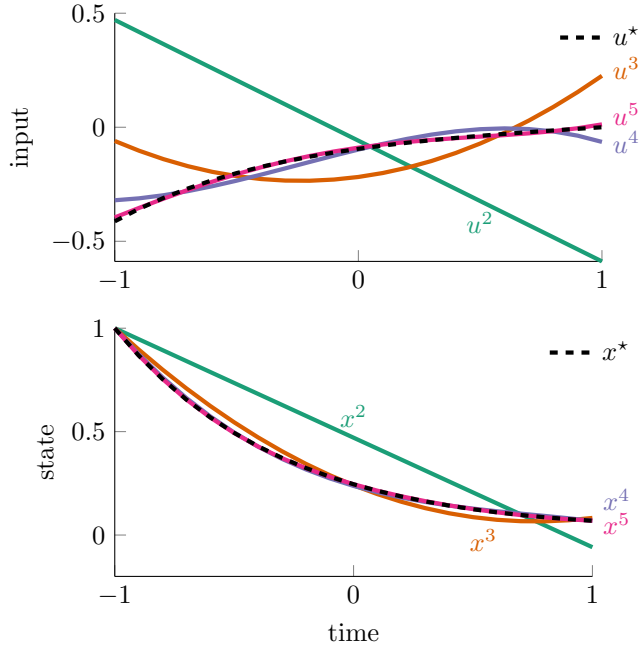

\section{Conclusions} \label{Sec:Conclusion}

We stated Gramian-based continuous-time versions of Willems et al.'s fundamental lemma in \Cref{thm:FL} and \Cref{cor:FL} in the case of input-output and input-state measurements, respectively. 
Then, we applied the derived results to the data-driven simulation problem in \Cref{cor:FL_poly,cor:FL_poly_is}. 

The evaluation of the performance of our approach in the case of noisy data is of pressing importance. The extension of our approach to the nonlinear case at least for specific classes of systems is also a matter of pressing research, especially in the light of recent results in the discrete (see \cite{Berberich21,Alsalti2023,strasser2024safedmd,molodchyk2024,lazar2023basis}) and continuous-time case~\cite{StraScha24}.

%\bibliographystyle{IEEEtran}
%\bibliographystyle{abbrv}
%\bibliography{lib}

\end{document}